\documentclass[11pt]{amsart}
\overfullrule5pt
\usepackage{stmaryrd}
\usepackage{xcolor}
\usepackage{epsfig}
\usepackage{enumerate}
\usepackage{comment}
\usepackage{amsmath}
\usepackage{amssymb}
\usepackage{amsthm}
\usepackage{amscd}
\usepackage{graphicx}
\usepackage{lineno}
\usepackage{pstricks}
\usepackage{tocvsec2}
\usepackage{mathtools}
\usepackage{todonotes}
\usepackage[basic]{mnotes}
\allowdisplaybreaks[4]

\usepackage{rotating}

\usepackage[colorlinks, citecolor=red, anchorcolor=black, linkcolor=blue]{hyperref}

\usepackage{amsfonts}
\usepackage[top=35mm, bottom=35mm, left=30mm, right=30mm]{geometry}
\usepackage{verbatim}
\usepackage{graphicx, amssymb, amsmath,amsthm}
\setlength{\topmargin}{0in} \setlength{\textheight}{8.5in}
\setlength{\textwidth}{6in} \setlength{\oddsidemargin}{0in}
\setlength{\evensidemargin}{0in} \setlength{\unitlength}{1cm}
\allowdisplaybreaks[4]
\usepackage{appendix}
\usepackage{mathrsfs}
\usepackage{cite}
\usepackage{titlesec}
\usepackage{titletoc}
\titleformat{\part}[block]{\huge\bfseries\centering}{Part \arabic{part}.}{0.5em}{}[]
\titleformat{\section}[block]{\Large\bfseries\centering}{ \arabic{section}.}{1em}{}[]
\titleformat{\subsection}[block]{\mdseries\bfseries}{\arabic{section}.\arabic{subsection}.}{1em}{}[]
\titleformat{\subsubsection}[block]{\normalsize\bfseries}{\arabic{section}.\arabic{subsection}.\arabic{subsubsection}.}{1em}{}[]

\makeatletter
\@namedef{subjclassname@2020}{%
	\textup{2020} Mathematics Subject Classification}
\makeatother

\makeatletter
\newcommand\@notni[2]{\mathrel{\rotatebox[y=#1]{180}{$#2\notin$}}}
\newcommand\notni{
\mathchoice
  {\@notni{0.57ex}\displaystyle}
  {\@notni{0.57ex}\textstyle}
  {\@notni{0.39ex}\scriptstyle}
  {\@notni{0.26ex}\scriptscriptstyle}
}
\makeatother





\def \a{\alpha}   
   
 \def \l{\lambda}  
 \def \D{\Delta}  
  \def \L{\Lambda}

\def \L{\Lambda}
\def \N{\mathbb N}
\def \Z{\mathbb Z}

\def\odo{ \Sigma_{(p_n)}}
\usepackage{xcolor}

\usepackage{tikz}
\usetikzlibrary{decorations.pathreplacing,calc}

\newcommand{\tikzmark}[1]{\tikz[remember picture,baseline=(#1.base)] \node[inner sep=0pt, outer sep=0pt] (#1) {};}





\usepackage{xcolor}
\newtheorem{thm}{Theorem}[section]
\newtheorem{lem}[thm]{Lemma}
\newtheorem{cor}[thm]{Corollary}

\newtheorem{prop}[thm]{Proposition}

\theoremstyle{definition}
\newtheorem{defn}[thm]{Definition}
\newtheorem{rem}[thm]{Remark}
\newtheorem{fact}{Fact}

\numberwithin{equation}{section}
\usepackage{epsfig}

\setlength{\unitlength}{1cm}
\newcommand{\good}{convenient}

\newtheorem{introthm}{Theorem}

\setlength{\topmargin}{0.13in} \setlength{\textheight}{8.85in}
\setlength{\textwidth}{6.6in} \setlength{\oddsidemargin}{-0.2in}
\setlength{\evensidemargin}{-0.2in} \setlength{\unitlength}{1cm}

\setcounter{secnumdepth}{4}
\setcounter{tocdepth}{4}
\dottedcontents{part}[0.5em]{\normalsize}{1em}{4pt}
\dottedcontents{section}[1.5em]{\normalsize}{1.5em}{4pt}
\dottedcontents{subsection}[3.5em]{\small}{2.0em}{4pt}
\dottedcontents{subsubsection}[4em]{\normalsize}{3.0em}{4pt}
\numberwithin{equation}{section}

    \setcounter{secnumdepth}{2}  
\renewcommand{\paragraph}[1]{\medskip\noindent\textbf{#1.}\quad}







\makeatother

	\subjclass[2020]{Primary: 37B05.  Secondary: 54H15.}
	\keywords{}

	\author{Gabriel Fuhrmann}
	\address[Gabriel Fuhrmann]
	{Department of Mathematical Sciences, Durham University, DH1 3LE, United Kingdom.}
	\email{gabriel.fuhrmann@durham.ac.uk}

%
%
\author[Chunlin Liu]{Chunlin Liu\textsuperscript{*}}
\thanks{\textsuperscript{*}Corresponding author.}
\address[Chunlin Liu]{School of Mathematical Sciences, Dalian University of Technology, Dalian, 116024, P.R. China, and
Institute of Mathematics, Polish Academy of Sciences, ul. Śniadeckich 8, 00-656 Warszawa, Poland.}
\email{chunlinliu@mail.ustc.edu.cn}

\newcommand{\Id}{\textrm{Id}}
\newcommand{\ssq}{\subseteq}
\newcommand{\1}{\underline 1}
\renewcommand{\epsilon}{\varepsilon}
\newcommand{\eps}{\varepsilon}\renewcommand{\epsilon}{\varepsilon}
\renewcommand{\subset}{\subseteq}
\newcommand{\choice}{choice domain} 
\newcommand{\mc}{\mathcal}
\renewcommand{\:}{\colon}



\begin{document}
	\title[Idempotents in the Ellis semigroup of Floyd-Auslander systems
    ]{Idempotents in the Ellis semigroup of Floyd-Auslander systems}

	\begin{abstract}
  We study minimal idempotents $J^{\mathrm{min}}(X)$ in the Ellis semigroup $E(X)$ associated with a Floyd-Auslander system $(X,T)$.
We show that $(X,T)$ is non-tame if and only if $|J^{\mathrm{min}}(X)| > 2^{\aleph_0}$, which happens exactly when the factor map onto the maximal equicontinuous factor possesses uncountably many non-invertible fibres.

This yields an easy-to-check criterion for distinguishing tame from non-tame Floyd-Auslander systems and, more importantly, provides an entire family of regular almost automorphic systems with $|J^{\mathrm{min}}(X)| > 2^{\aleph_0}$.
Notably, all previously known regular almost automorphic non-tame systems exhibited only a small (i.e.\ $\leq 2^{\aleph_0}$) set of minimal idempotents.

We obtain our result by leveraging an alternative characterisation of (non)-tameness through, what we call, \emph{choice domains}.
\end{abstract}
 \maketitle
	\parskip 10pt
\section{Introduction}
Given a topological dynamical system (TDS), that is, a pair $(X,T)$ where $X$ is a compact metric space and $T\colon X \to X$ is continuous, an algebraic invariant that helps to understand its dynamics is the \emph{Ellis semigroup} $E(X,T)$, defined as the closure in $X^X$ of the set of iterates of $T$.
The Ellis semigroup is well known to encode key structural features of a topological dynamical system; for example, it characterises properties such as distality and minimality, and helps to identify the maximal equicontinuous factor\cite{Ellis1969,Auslanderbook,MR603625}.

However, one major obstacle in studying the semigroup $E(X,T)$ is the complex nature of the semigroup itself, as often, $E(X,T)$ is \emph{non-tame}: it contains a copy of the Stone-Czech compactification of $\N$.
By contrast, tame (i.e.\ not non-tame, equivalently, $|E(X,T)|\leq 2^{\aleph_0}$) systems form a rather restricted class: if $(X,T)$ is minimal (all orbits dense) and tame, then $(X,T)$ is \emph{regular almost automorphic}, that is, the factor map $\pi$  from $(X,T)$ to its maximal equicontinuous factor $(X_{\textrm{eq}},T_{\textrm{eq}})$ is such that $\{x\in X_{\textrm{eq}}\: |\pi^{-1}x|=1\}$ is of full measure (with respect to the unique invariant measure of $(X_{\textrm{eq}},T_{\textrm{eq}})$) \cite{Glasner2018,FuhrmannGlasnerJagerOertel2021}. 
Nonetheless, there has been ample progress in understanding $E(X,T)$ as well as the connection between its algebraic properties and the dynamics of $(X,T)$  for a growing number of systems, see e.g.\ \cite{HaddadJohnson1997,Huang2006,Donoso2014,HLSY2021,KellendonkReem2022,FuhrmannKellendonkYassawi2024,GlanserMegrelishvili,Kellendonk2024,LWX2025,LXZ2025} (and references therein) for a very incomplete list.

The contribution of the present article is not merely to extend this list by one more class of examples but rather, is three-fold.
First of all, by completing the analysis started in \cite{HaddadJohnson1997}, we want to expand our understanding of the Ellis-semigroup of $\Z$-actions beyond totally disconnected spaces: arguments pertaining to the tameness or non-tameness of a system often leverage some form of a shift-representation (possibly with an infinite alphabet, see e.g. \cite{FuhrmannDominik}); while such representations are, in principle, always available, in explicit examples---like the ones studied in this article---such abstract tools are of little help.
Second, our results show a previously unobserved phenomenon: we show that it is well possible among regular almost automorphic systems to not only have a big ($>2^{\aleph_0}$) Ellis semigroup but actually, already have a big collection of minimal idempotents in $E(X,T)$.
We derive this result by exploiting the existence of what we term \emph{uncountable choice domains} in the full shift.
Third, through introducing the notion of choice domains, we provide an explicit and readily verifiable criterion for detecting non-tameness. 
We expect this criterion to serve as a useful alternative to the well-known approach via infinite independence sets \cite{KerrLi2007}.

\paragraph{Main results}
Our primary contribution concerns an analysis of the idempotents in the Ellis semigroup of Floyd-Auslander systems.
Roughly speaking, Floyd-Auslander systems $(X,T)$ are constructed by repeatedly
subdividing the unit square into thin vertical rectangles, some of full height and some of half height.\footnote{We note that the naming convention for these systems is not entirely consistent. Haddad and Johnson use the term \emph{generalised Auslander systems}, referring to Auslander's 1959 example \cite{Auslander}, which was itself inspired by earlier work of Floyd \cite{Floyd1949}. In this article we adopt the name \emph{Floyd-Auslander systems}, which has been used in recent literature and acknowledges the contributions of both authors.}  Taking the intersection of all subdivision stages produces a
compact set $X \ssq [0,1]^2$ whose projection to the first coordinate is a
Cantor set $C$.  
For each $\alpha \in C$, the vertical fibre $\{\alpha\} \times [0,1] \cap X$ is either a single point or a vertical line segment of dyadic length, depending on the sequence of full-height versus half-height rectangles encountered in the construction.
The map $T$ then acts on the horizontal coordinate like an odometer; in the vertical direction, it acts by sending each vertical fibre that is a line segment linearly onto the corresponding line segment in the next fibre.  
See Section~\ref{sec: Floyd-Auslander systems} for a detailed and analytically convenient description of Floyd-Auslander systems, which we use in place of the more geometric presentation commonly found in the literature (and summarised above), see e.g.\ \cite{HaddadJohnson1997}.

To the authors’ knowledge, the set of idempotents in the Ellis semigroup of minimal Floyd-Auslander systems was first comprehensively studied in \cite{HaddadJohnson1997}. There, it was shown that if the set of (non-degenerate) interval fibres is countable, then the set of idempotents \(J(X) \subseteq E(X,T)\) has cardinality \(2^{\aleph_0}\). 
We remark here that countability of the interval fibres is straightforward to verify: if $Q(n)$ denotes the number of full-height rectangles in the \(n\)-th stage of the construction, then there are countably many interval fibres if and only if 
\(\Lambda = \{ n \in \mathbb{N} : |Q(n)| \ge 2 \}\) is finite.

Besides this, the authors' of \cite{HaddadJohnson1997} observed that “in general \mbox{[Floyd-]}Auslander systems, $J(X)$ could be quite complex”.
It is this remaining complex case that we shed light on with the following result.

\begin{introthm}\label{thm: main 1 intro}
    Let $(X,T)$ be a minimal Floyd-Auslander system.
Then the following statements are equivalent.
    \begin{enumerate}
        \item $(X,T)$ is non-tame;\label{item: 1}
        \medskip
        \item the set of minimal idempotents $J^{\mathrm{min}}(X)$ is large: $|J^{\mathrm{min}}(X)|>2^{\aleph_0}$;\label{item: 4}
        \medskip
        \item $\L=\{n\in \N\colon |Q(n)|\geq 2\}$ is infinite.\label{item: 6}
    \end{enumerate}
\end{introthm}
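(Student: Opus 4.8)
The plan is to prove the cycle of implications \eqref{item: 6}$\Rightarrow$\eqref{item: 4}$\Rightarrow$\eqref{item: 1}$\Rightarrow$\eqref{item: 6}, with the first implication carrying almost all of the work. The implication \eqref{item: 4}$\Rightarrow$\eqref{item: 1} is immediate: if $|J^{\mathrm{min}}(X)|>2^{\aleph_0}$ then certainly $|E(X,T)|>2^{\aleph_0}$, which is precisely non-tameness. For \eqref{item: 1}$\Rightarrow$\eqref{item: 6}, I would argue the contrapositive: if $\Lambda$ is finite, then (as recalled in the excerpt, following \cite{HaddadJohnson1997}) there are only countably many non-degenerate interval fibres, and hence the factor map $\pi\colon X\to X_{\mathrm{eq}}$ onto the odometer is injective off a countable set. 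I would then invoke the structural/shift-representation machinery from the earlier sections to conclude that such a system is tame---essentially because the finitely many "branching" stages contribute only a bounded amount of combinatorial freedom, so the Ellis semigroup has size at most $2^{\aleph_0}$; this should match the $|J(X)|=2^{\aleph_0}$ computation of Haddad--Johnson, upgraded to the full Ellis semigroup.

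The heart of the matter is \eqref{item: 6}$\Rightarrow$\eqref{item: 4}. Here I would use the \emph{choice domain} formalism advertised in the abstract and introduction: when $\Lambda=\{n\colon|Q(n)|\ge 2\}$ is infinite, the sequence of subdivisions produces, at infinitely many scales, a genuine binary choice (full-height versus half-height) that is not forced by minimality. Along a suitable shift-representation of $(X,T)$, these independent binary choices assemble into an uncountable choice domain inside the full shift. The key point is then a translation of such a choice domain into idempotents: each element of the choice domain should determine, via a standard Ellis-semigroup limit argument (taking limits of $T^{n_k}$ along times adapted to the chosen branch at each scale in $\Lambda$), a distinct idempotent in $E(X,T)$, and moreover one that is \emph{minimal} (e.g.\ by arranging the limit to lie in a minimal left ideal, or by checking it fixes a point of the minimal set and using minimality of $(X,T)$). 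Since the choice domain has size $2^{2^{\aleph_0}}$ when $\Lambda$ is infinite---infinitely many independent binary choices parametrised over an uncountable index set---this yields $|J^{\mathrm{min}}(X)|>2^{\aleph_0}$.

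The main obstacle I anticipate is precisely this last translation: showing that distinct branches of the choice domain give rise to \emph{distinct} idempotents, and that these idempotents can be taken to be \emph{minimal} rather than merely idempotent. For distinctness one must exhibit, for any two branches differing at some scale $n\in\Lambda$, a point of $X$ on which the two associated limit maps disagree---this should come from the geometry of the interval fibres, since differing at scale $n$ forces the corresponding vertical fibres to be collapsed differently. For minimality, the cleanest route is probably to build the idempotents directly inside a prescribed minimal left ideal $L$ of $E(X,T)$: fix such an $L$, and realise each branch-dependent limit as an idempotent in $L$ (using that minimal left ideals are closed and that $uL=L$ for any idempotent $u\in L$), so that minimality is automatic and only distinctness remains to be checked. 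A secondary technical point is bookkeeping: one must choose the return times $(n_k)$ defining each limit carefully enough that the limit exists, is idempotent, and genuinely "remembers" the branch at every scale in $\Lambda$ simultaneously---this is where the combinatorial description of Floyd--Auslander systems from Section~\ref{sec: Floyd-Auslander systems} will be essential.
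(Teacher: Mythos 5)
Your overall architecture matches the paper's: the cycle \eqref{item: 6}$\Rightarrow$\eqref{item: 4}$\Rightarrow$\eqref{item: 1}$\Rightarrow$\eqref{item: 6}, with \eqref{item: 4}$\Rightarrow$\eqref{item: 1} trivial, \eqref{item: 1}$\Rightarrow$\eqref{item: 6} delegated to the known result that finite $\Lambda$ forces tameness, and all the work in \eqref{item: 6}$\Rightarrow$\eqref{item: 4} via choice domains. However, there is a genuine cardinality gap in your main step. A choice domain is by definition a subset of $\{0,1\}^{\mathbb N}$, so it has cardinality at most $2^{\aleph_0}$; your assertion that it ``has size $2^{2^{\aleph_0}}$'' is impossible, and your plan of producing one idempotent per \emph{element} of the choice domain can therefore yield at most $2^{\aleph_0}$ idempotents --- not enough to conclude $|J^{\mathrm{min}}(X)|>2^{\aleph_0}$. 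What is actually needed (and what the paper does) is to transfer an uncountable choice domain $\mathcal T$ of the full shift into an uncountable family $\mathcal A=\{\alpha^x : x\in\mathcal T\}$ of maximal fibres (the binary choice at each scale $n_\ell\in\Lambda$ selecting between two symbols $a_0(n_\ell)\neq a_1(n_\ell)\in Q(n_\ell)$), and then to build one idempotent $f_B$ for each \emph{subset} $B\subseteq\mathcal A$; the defining property of a choice domain is precisely what lets the return times realise an arbitrary $0/1$-labelling of any finite subset of $\mathcal A$ simultaneously, hence an arbitrary $B$ in the limit. The resulting count is $2^{|\mathcal A|}$, not $|\mathcal A|$, and this is what exceeds $2^{\aleph_0}$.

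A second, related gap concerns minimality versus distinctness. Composing your branch-dependent idempotents with elements of a minimal left ideal (or building them inside one) can a priori collapse exactly the distinctions you constructed, so ``only distinctness remains to be checked'' is where the real difficulty sits. The paper resolves this by a quantitative separation (property $(\ast)$): $f_B$ sends $L_\alpha$ into $\{\alpha\}\times[0,a]$ for $\alpha\in\mathcal A\setminus B$ but strictly above $a$ (or acts as the identity) for $\alpha\in B$, and an auxiliary \emph{minimal} idempotent $f$ is constructed (Lemma~\ref{lem:existence of minimal}) whose image on each $L_\alpha$, $\alpha\in\mathcal A$, avoids $[0,a]$; then $f_B\circ f$ is minimal because it collapses every fibre (Lemma~\ref{lem:when is minimal}), and $f_B\circ f\neq f_{B'}\circ f$ because evaluation at some $\alpha\in B'\setminus B$ lands in $[0,a]$ for one map and above $a$ for the other. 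Without such a uniform gap built into the construction, your argument does not close.
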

We want to emphasize that the demanding part of proving the above statement is the implication \eqref{item: 6} $\Rightarrow$ \eqref{item: 4}.
In fact, \eqref{item: 1} $\Rightarrow$ \eqref{item: 6} is essentially shown in \cite{GlanserMegrelishvili}, while \eqref{item: 4} $\Rightarrow$ \eqref{item: 1} is trivial.

Our second main result is, in fact, a prerequisite for proving our first 
main result and technically far less involved.
It is nonetheless, interesting in its own right and deserves to be highlighted separately.
To formulate it, we need the following terminology.
  Given a binary subshift $(X,\sigma)$, we call $\mathcal T\ssq X$ \emph{a choice domain} if for all $m,n \in \N$, any subset of $n$ distinct points
$(x^1_\ell)_{\ell\in \N},\ldots,(x^n_\ell)_{\ell\in \N}\in \mathcal T$, and any collection of choice functions $(\phi^1_k)_{k=1}^m,\ldots,(\phi^n_k)_{k=1}^m\in \{0,1\}^m$  there are times $\tau_1<\ldots <\tau_m$ such that
\[
 x^i_{\tau_k}=\phi^i_k \qquad (i=1,\ldots,n, \ k=1,\ldots,m).
\]
 \begin{introthm}\label{thm: choice domains intro}
     A binary subshift is non-tame if and only if it admits an uncountable choice domain.
 \end{introthm}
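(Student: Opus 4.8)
The plan is as follows. Throughout we use the standard equivalence (see, e.g., \cite{KerrLi2007,GlanserMegrelishvili}) that a binary subshift $(X,\sigma)$ is non-tame if and only if it possesses an \emph{infinite independence set}, i.e.\ an infinite $I=\{i_1<i_2<\dots\}\ssq\N$ with $\{x|_I:x\in X\}=\{0,1\}^I$ (equivalently, by compactness of $X$, such that every finite $0$--$1$ pattern on $I$ is realised in $X$). So it suffices to prove two implications: \emph{(I)} an infinite independence set yields an uncountable --- in fact continuum-sized --- choice domain, and \emph{(II)} an uncountable choice domain yields an infinite independence set.

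For \emph{(I)}, I would invoke the classical fact that there is an \emph{independent family} $\{A_r:r\in R\}\ssq 2^{\N}$ with $|R|=2^{\aleph_0}$, meaning that $\bigcap_{i\le p}A_{r_i}\cap\bigcap_{i\le q}(\N\setminus A_{s_i})$ is infinite for all distinct $r_1,\dots,r_p,s_1,\dots,s_q\in R$. Fixing an infinite independence set $I=\{i_1<i_2<\dots\}$, I choose for each $r\in R$ a point $x^r\in X$ with $x^r_{i_j}=\mathbf{1}_{A_r}(j)$ for all $j$, and set $\mathcal{T}:=\{x^r:r\in R\}$; since the restriction of $x^r$ to $I$ recovers $A_r$, distinct $r$'s give distinct points and $|\mathcal{T}|=2^{\aleph_0}$. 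Checking the choice-domain property is then a direct translation: given distinct $x^{r_1},\dots,x^{r_n}\in\mathcal{T}$ and $\phi^1,\dots,\phi^n\in\{0,1\}^m$, one searches for the times inside $I$, writing $\tau_k=i_{j_k}$; the condition $x^{r_i}_{\tau_k}=\phi^i_k$ becomes ``$j_k\in A_{r_i}$ iff $\phi^i_k=1$'', whose solution set is infinite by independence of the family, so $j_1<\dots<j_m$ can be chosen greedily.

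The substance of the theorem is \emph{(II)}. I would first record a consequence of the definition (apply it with a large $m$ and constant choice functions, using that the $\tau_k$ are distinct): if $\mathcal{T}$ is a choice domain, $x^1,\dots,x^n\in\mathcal{T}$ are distinct, and $b_1,\dots,b_n\in\{0,1\}$, then for every $N$ there is a time $\tau>N$ with $x^i_\tau=b_i$ for all $i$. Since $\mathcal{T}$ is uncountable, the set $\mathcal{T}^{\mathrm c}$ of points of $\mathcal{T}$ that are condensation points of $\mathcal{T}$ has countable complement in $\mathcal{T}$, so $\mathcal{T}^{\mathrm c}$ is uncountable and every one of its points is also a condensation point of $\mathcal{T}^{\mathrm c}$; in particular every cylinder based at a point of $\mathcal{T}^{\mathrm c}$ meets $\mathcal{T}^{\mathrm c}$ in an uncountable set. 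Using this I would build recursively a Cantor scheme \emph{inside $\mathcal{T}$}: starting the recursion from an arbitrary root point of $\mathcal{T}^{\mathrm c}$ (with no coordinates yet and $N_0:=0$), points $z_s\in\mathcal{T}^{\mathrm c}$ for $s\in\{0,1\}^{<\omega}$, coordinates $c_1<c_2<\dots$, and prefix lengths $N_0\le N_1\le\dots$ with $N_j\to\infty$ and $c_j<N_j$, such that (i) $z_{sb}$ agrees with $z_s$ on the first $N_{|s|}$ coordinates, and (ii) $z_s$ reads the word $s$ on $\{c_1,\dots,c_{|s|}\}$ (that is, $(z_s)_{c_k}=s_k$ for $k\le|s|$). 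In the step from level $j-1$ to level $j$: for each $s\in\{0,1\}^{j-1}$ pick distinct $z_{s0},z_{s1}\in\mathcal{T}^{\mathrm c}$ agreeing with $z_s$ on the first $N_{j-1}$ coordinates (possible, since that cylinder meets $\mathcal{T}^{\mathrm c}$ uncountably); the resulting $2^j$ points are pairwise distinct (they already disagree on $\{c_1,\dots,c_{j-1}\}$ whenever their parents do) and each still reads its parent's word $s$ there, since $c_{j-1}<N_{j-1}$; applying the observation above to these $2^j$ points with target bits $b$ produces a single new coordinate $c_j>c_{j-1}$ with $(z_{sb})_{c_j}=b$ for all $s,b$, so $z_{sb}$ reads $sb$ on $\{c_1,\dots,c_j\}$; finally choose any $N_j>\max\{N_{j-1},c_j\}$ and continue. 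Then for each $\alpha\in\{0,1\}^{\N}$ the limit $z_\alpha:=\lim_j z_{\alpha|_j}$ exists (because $N_j\to\infty$), lies in $\overline{\mathcal{T}}\ssq X$ (only closedness of $X$ is used), and reads $\alpha|_j$ on $\{c_1,\dots,c_j\}$ for every $j$; hence $\{x|_{\{c_1,\dots,c_j\}}:x\in X\}=\{0,1\}^j$ for all $j$, i.e.\ $\{c_1,c_2,\dots\}$ is an infinite independence set, and $X$ is non-tame.

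The real obstacle lies in \emph{(II)}, and it is one of \emph{coherence}: the naive strategy of enlarging a nested chain of finite independence sets one coordinate at a time fails, because the choice-domain property returns times of its own choosing rather than the ones already fixed --- and indeed a tame subshift may well possess arbitrarily large \emph{finite} independence sets, so one genuinely has to manufacture an infinite one. The ``agree on a long prefix'' device resolves this: it keeps the finitely many coordinates chosen so far witnessing the old patterns, while exactly one fresh coordinate, handed over by the choice domain, is appended at each level. It is uncountability of $\mathcal{T}$, not mere infinitude, that keeps the scheme running through all $\aleph_0$ stages: an infinite choice domain already forces arbitrarily large finite independence sets (apply the construction to any $2^k$ of its points), but such sets need not assemble into an infinite one, so the condensation-point structure afforded by uncountability is really used. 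No minimality or surjectivity of $\sigma$ enters the argument.
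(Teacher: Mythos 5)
Your proposal is correct, and in the substantive direction it takes a genuinely different route from the paper's. For ``non-tame $\Rightarrow$ uncountable choice domain'' the two arguments share a skeleton---produce an uncountable choice domain of the full shift and transport it into $X$ along an infinite independence set---but the witnesses differ: the paper takes a \emph{maximal} choice domain of $(\{0,1\}^\N,\sigma)$ via Zorn's Lemma and shows by diagonalisation that no countable choice domain is maximal (Lemma~\ref{lem: uncountable choice domains}), whereas you exhibit one explicitly as a Fichtenholz--Kantorovich--Hausdorff independent family of cardinality $2^{\aleph_0}$; read as indicator sequences, such a family \emph{is} a choice domain of the full shift, so your version is constructive and pins down the optimal cardinality. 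The genuine divergence is in the converse. The paper works directly in the Ellis semigroup: for each $f\in\{0,1\}^{\mathcal T}$ it produces, as an accumulation point of a net indexed by the finite subsets of $\mathcal T$, an element $\eta_f\in E(X)$ with $\eta_f(x)\in A_{f(x)}$ for all $x\in\mathcal T$, and concludes $|E(X)|\ge 2^{|\mathcal T|}$. You instead manufacture an infinite independence set by a Cantor scheme rooted at condensation points of $\mathcal T$, using the choice-domain property only to hand over one fresh coordinate per level while long shared prefixes protect the coordinates already committed to; your closing remarks correctly identify why uncountability (not mere infinitude) and this coherence device are what make the scheme run, and your construction is sound as written. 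Your detour is longer but purely combinatorial, and it buys robustness: the inequality $2^{|\mathcal T|}>2^{\aleph_0}$ on which the paper's count relies is immediate when $|\mathcal T|=2^{\aleph_0}$ (which your explicit construction delivers in the other direction) but is not a theorem of ZFC for an arbitrary uncountable $\mathcal T$---it can fail for $|\mathcal T|=\aleph_1$ in models where $2^{\aleph_0}=2^{\aleph_1}$---whereas your independence-set argument needs nothing beyond uncountability and lands directly in item (3) of Theorem~\ref{thm: characterisations of tameness}.
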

We remark that the restriction to shifts is solely for convenience. 
The extension of the notion of choice domains, and of the preceding theorem, to general TDS is straightforward. 
However, in order to keep the notational overhead---already considerable in this article---to a minimum, we have decided to confine the discussion of choice domains to the setting directly relevant for proving Theorem~\ref{thm: main 1 intro}, which is precisely the setting of Theorem~\ref{thm: choice domains intro}.

\paragraph{Outline}
The next section collects the general background required throughout the article.
Section~\ref{sec: preparations} then introduces two key prerequisites: 
Theorem~\ref{thm: choice domains intro} and a technical sufficient criterion for the existence of many minimal idempotents.
The final section combines these ingredients to prove Theorem~\ref{thm: main 1 intro}.

\noindent
{\bf Acknowledgments.}
The first author would like to thank Johannes Kellendonk and Reem Yassawi for valuable discussions concerning choice domains and for highlighting the question of whether regular almost automorphic systems may possess many idempotents.
The second author gratefully acknowledges valuable discussions with Wen Huang. This work  was supported by   the
		Postdoctoral Fellowship Program and China Postdoctoral
		Science Foundation under Grant Number BX20250067, and the China Postdoctoral Science
		Foundation under Grant Number 2025M773074.

\section{Preliminaries and background}
We begin by briefly reviewing some basic facts concerning the Ellis semigroup of a TDS, and then discuss the construction of Floyd-Auslander systems. While the former is largely standard, our construction of Floyd-Auslander systems departs from the usual approach, making Section~\ref{sec: Floyd-Auslander systems} essential for the remainder of the article.

\subsection{The Ellis semigroup, its idempotents, and tameness}\label{sec: prelims tameness elis semigp}

Given a TDS $(X,T)$, its \emph{Ellis semigroup} $E(X,T)$ is defined as
\[
E(X,T) = E(X) := \overline{\{T^n : n \in \mathbb{N}\}} \subseteq X^X,
\]
where the closure is taken in the product topology and the semigroup operation is given by composition.
\begin{rem}\label{rem: forward Ellis gp}
    Note that strictly speaking, if $T$ is bijective, the above only defines its \emph{forward} Ellis semigroup (with the \emph{full} Ellis semigroup given by the closure over all $T^n$ with $n\in \Z$).
The distinction is not particularly relevant for us as the idempotents which we construct already lie in $E(X)$ (as defined above).
\end{rem}

It is straightforward to verify that $E(X)$ is a compact right topological semigroup.
Recall that a semigroup $E$ equipped with a topology is said to be \emph{right topological} if, for each $g \in E$, the map $E \ni f \mapsto f g$ is continuous.

Given a semigroup $E$, an element $f \in E$ is called an \emph{idempotent} if $f^2 = f$.
The following classical result is due to Naka\-mura~\cite{MR48467} (see also \cite[Lemma~8.4]{MR603625}).

\begin{lem}\label{lem:existence of idempotent}
If $E$ is a compact right topological semigroup, then $E$ contains an idempotent.
\end{lem}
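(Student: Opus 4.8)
The plan is to invoke Zorn's lemma to extract a \emph{minimal} nonempty closed subsemigroup $M \subseteq E$, and then to show that such an $M$ is forced to contain an idempotent; in fact one sees along the way that every element of a minimal such $M$ is idempotent, but we only need one.

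First I would set up the Zorn argument. Let $\mathcal{A}$ be the family of all nonempty closed subsemigroups of $E$, partially ordered by reverse inclusion; it is nonempty since $E \in \mathcal{A}$. Given a chain $\{M_i\}_{i\in I}$ in $\mathcal{A}$, the intersection $M=\bigcap_{i}M_i$ is again a subsemigroup and is closed; it is nonempty because the $M_i$ are closed subsets of the compact space $X$ with the finite intersection property (any finite subfamily of a chain has a smallest, hence nonempty, member). So every chain has an upper bound, and Zorn's lemma yields a minimal element $M\in\mathcal{A}$.

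Next, fix any $p\in M$ and consider $Mp:=\{qp : q\in M\}$. Since $M$ is a semigroup and $p\in M$, we have $Mp\subseteq M$, and $Mp$ is a subsemigroup: for $q_1,q_2\in M$, associativity gives $(q_1 p)(q_2 p)=(q_1 p q_2)\,p\in Mp$, because $q_1 p q_2\in M$. Crucially, $Mp$ is the image of the compact set $M$ under the map $f\mapsto fp$, which is continuous precisely because $E$ is \emph{right} topological; hence $Mp$ is compact, thus closed. So $Mp$ is a nonempty closed subsemigroup contained in $M$, and minimality forces $Mp=M$. In particular $p\in Mp$, so there exists $q\in M$ with $qp=p$. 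Now set $N:=\{q\in M : qp=p\}$, which is nonempty by the previous line. It is closed, being the intersection of $M$ with the preimage of $\{p\}$ under the continuous map $f\mapsto fp$, and it is a subsemigroup, since for $q_1,q_2\in N$ we have $q_1 q_2\in M$ and $(q_1 q_2)p=q_1(q_2 p)=q_1 p=p$. Thus $N$ is a nonempty closed subsemigroup of $M$, and minimality gives $N=M$. Since $p\in M=N$, the defining condition of $N$ reads $pp=p$, i.e.\ $p$ is an idempotent.

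The only genuinely delicate point is to use the right topological hypothesis at exactly the two places where it is needed—namely to see that $Mp$ and $N$ are closed—and to use compactness both there (to upgrade continuous images and preimages to closed sets within a compact space) and in the Zorn step (to guarantee that the intersection over a chain is nonempty). Everything else is a routine verification of the semigroup axioms, so I do not expect a substantial obstacle beyond keeping track of these hypotheses.
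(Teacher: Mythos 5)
Your proof is correct and is exactly the classical Ellis--Namakura argument: Zorn's lemma applied to nonempty closed subsemigroups under reverse inclusion, followed by the two minimality steps $Mp=M$ and $\{q\in M: qp=p\}=M$. The paper does not prove this lemma itself but cites Nakamura and \cite[Lemma~8.4]{MR603625}, where this same argument appears; the only implicit hypothesis you rely on (to make $\{p\}$ closed) is that the compact semigroup is Hausdorff, which is standard in this setting and automatic for $E(X)\subseteq X^X$.
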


We denote by $J(X)$ the set of all idempotents of $E(X)$.
The above statement gives $J(X)\neq \emptyset$.
There is a natural partial order on $J(X)$ defined by
\[
e \leq f \quad \text{if and only if} \quad e = ef \qquad (e,f \in J(X)).
\]
An element $f \in J(X)$ is called a \emph{minimal idempotent} if it is minimal with respect to $\leq$ (see \cite{HindmanStrauss2012} for details).
We denote by $J^{\mathrm{min}}(X)$ the set of all minimal idempotents of $E(X)$.
Since $E(X)$ is a compact right topological semigroup, it follows that $J^{\mathrm{min}}(X) \neq \emptyset$ (see, for example, \cite[Corollary~2.6 and Theorem~2.9]{HindmanStrauss2012}).

The Ellis semigroup is a fundamental, easily defined, and yet remarkably rich conjugacy invariant of topological dynamical systems.
However, gaining a concrete understanding of $E(X)$ for specific systems is notoriously difficult; see, for instance, \cite{Glasner1993,Donoso2014,Staynova2021,KellendonkReem2022}.
One reason for this difficulty is that many systems are \emph{non-tame}, meaning that, topologically, their Ellis semigroup is as large as possible.
More precisely, a system $(X,T)$ is called \emph{non-tame} if its Ellis semigroup contains a copy of the Stone–Čech compactification $\beta \mathbb{N}$; otherwise, $(X,T)$ is called \emph{tame}.

The study of (non-)tameness has been motivated by several factors, one of the most prominent being that a thorough understanding of these notions is crucial for developing a comprehensive algebraic theory of topological dynamics.
During the past two decades, significant progress has been achieved, much of it relying on the fact that there are several equivalent characterisations of \mbox{(non-)}tameness.
Before stating two such characterisations that are particularly relevant for the present work, we introduce the following concept.

Given a TDS $(X,T)$ and subsets $A_0, A_1 \subseteq X$, we say that
$J \subseteq \mathbb{N}$ is an \emph{independence set} for $(A_0, A_1)$ if,
for every finite subset $I \subseteq J$ and each
$\varphi \in \{0,1\}^I$, there exists $x \in X$ such that
$T^i(x) \in A_{\varphi(i)}$ for each $i \in I$ \cite{KerrLi2007}. 

For the following statement, see, for example,
\cite{GlasnerMegrelishvili2006,Huang2006,KerrLi2007}.

\begin{thm}\label{thm: characterisations of tameness}
Let $(X,T)$ be a TDS. Then the following are equivalent.
\begin{enumerate}
  \item $(X,T)$ is non-tame;
  \item $|E(X)| > 2^{\aleph_0}$;
  \item There exist closed, disjoint subsets $A_0, A_1 \subseteq X$ that admit an infinite independence set.
\end{enumerate}
\end{thm}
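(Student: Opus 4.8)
The plan is to establish the three-way equivalence by combining two classical dichotomies: the Bourgain--Fremlin--Talagrand (BFT) dichotomy for pointwise closures of countable families of functions, and Rosenthal's $\ell_1$ theorem together with its combinatorial ``independence'' criterion. The implications arrange into a cycle \mbox{(1)$\Rightarrow$(2)$\Rightarrow$(3)$\Rightarrow$(1)}. The step (1)$\Rightarrow$(2) is immediate, since a copy of $\beta\mathbb{N}$ has cardinality $2^{2^{\aleph_0}}>2^{\aleph_0}$, so a non-tame system has $|E(X)|>2^{\aleph_0}$. To run the analytic machinery for the two remaining steps, I would first fix a countable family $\{g_k\}_{k\in\N}\subseteq C(X)$ separating the points of the compact metric space $X$; then the coordinate map $E(X)\ni g\mapsto (g_k\circ g)_{k\in\N}$ is an injective continuous embedding of $E(X)$ into a countable product of copies of $\mathbb{R}^X$. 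This lets me pass freely between the $X$-valued enveloping semigroup and real-valued families, where BFT and Rosenthal apply, and it records the key fact $g_k\circ g\in\overline{\{g_k\circ T^n\}}$ for every $g\in E(X)$ (pointwise closure in $\mathbb{R}^X$).

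For (3)$\Rightarrow$(1), suppose $J=\{j_1<j_2<\cdots\}$ is an infinite independence set for disjoint closed sets $A_0,A_1$. By Urysohn's lemma I pick $f\in C(X)$ with $f|_{A_0}\equiv 0$ and $f|_{A_1}\equiv 1$, and set thresholds $a=1/4$, $b=3/4$. The defining property of $J$ says exactly that the pairs $(\{f\circ T^{j_i}\le a\},\{f\circ T^{j_i}\ge b\})_{i}$ are Boolean-independent, since a point witnessing a finite pattern lands in $A_0$ or $A_1$ and is thereby mapped by $f$ below $a$ or above $b$. This is precisely the combinatorial hypothesis in Rosenthal's theorem guaranteeing that $(f\circ T^{j_i})_i$ has no pointwise convergent subsequence. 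Consequently $(T^{j_i})_i$ itself has no pointwise convergent subsequence in $X^X$, as a pointwise limit would persist after composing with $f$. The BFT dichotomy, applied to the countable family $(T^{j_i})_i$ viewed coordinatewise through the embedding above, then forces its pointwise closure---which is contained in $E(X)$---to contain a homeomorphic copy of $\beta\mathbb{N}$. Hence $(X,T)$ is non-tame.

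For (2)$\Rightarrow$(3) I argue by contraposition, showing that the absence of infinite independence sets makes $E(X)$ small. If no pair of disjoint closed sets admits an infinite independence set, then for every $f\in C(X)$ and all rationals $a<b$ the disjoint closed sets $A_0=\{f\le a\}$, $A_1=\{f\ge b\}$ have no infinite independence set, so the pairs $(\{f\circ T^n\le a\},\{f\circ T^n\ge b\})_n$ are not Boolean-independent along any infinite index set. By Rosenthal's theorem this excludes $\ell_1$-subsequences and forces every sequence $(f\circ T^n)_n$ to admit a pointwise convergent subsequence. Thus each family $\{g_k\circ T^n\}_n$ is Rosenthal (tame), so by BFT every element of its closure is a Baire-class-$1$ function on the Polish space $X$. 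Since $B_1(X)$ has cardinality at most $2^{\aleph_0}$, the injective embedding of $E(X)$ into $\prod_k B_1(X)$ yields $|E(X)|\le (2^{\aleph_0})^{\aleph_0}=2^{\aleph_0}$, i.e.\ (2) fails.

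I expect the main obstacle to be the careful bookkeeping in the transfer between the real-valued and $X$-valued settings, rather than the two deep theorems themselves, which I would cite from \cite{GlasnerMegrelishvili2006,Huang2006,KerrLi2007}. Concretely, one must verify on \emph{both} sides that the combinatorial independence condition matches exactly Rosenthal's threshold hypothesis: the choice of the Urysohn function and of the levels $a<b$, and the fact that the pattern-realizing points lie in the designated closed sets, so that independence of the threshold sets coincides with independence of $(A_0,A_1)$. A secondary point needing care is the passage from ``tame'' to the quantitative bound, namely the estimate $|B_1(X)|\le 2^{\aleph_0}$ and the observation that a countable product of copies of $B_1(X)$ still has cardinality $2^{\aleph_0}$, which is what upgrades the structural (Rosenthal/Baire-$1$) conclusion to the cardinality statement in (2).
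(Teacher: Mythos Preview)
The paper does not give its own proof of this theorem; it simply records it as a known result and refers the reader to \cite{GlasnerMegrelishvili2006,Huang2006,KerrLi2007}. Your sketch is therefore not competing with any argument in the paper---you have supplied what the paper deliberately outsources.

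That said, your outline is essentially the standard proof found in those references: the cycle (1)$\Rightarrow$(2)$\Rightarrow$(3)$\Rightarrow$(1) via the BFT dichotomy and Rosenthal's $\ell_1$ theorem is exactly how Glasner--Megrelishvili and Kerr--Li establish the equivalence. The one place where your write-up would need a little more care is the application of BFT in the step (3)$\Rightarrow$(1): the classical BFT dichotomy is stated for real-valued functions, so to conclude that $E(X)\subseteq X^X$ itself contains a copy of $\beta\N$ (rather than merely that some continuous image of $E(X)$ does, which only gives (2)) you need either the dynamical BFT dichotomy of Glasner--Megrelishvili for separable compact subsets of $X^X$, or the observation that $E(X)$ is a separable compactum which fails to be angelic/Fr\'echet once some sequence $(T^{j_i})$ has no convergent subsequence. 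Your embedding via a separating family $(g_k)$ is the right device for this, but the passage from ``some coordinate family is not Rosenthal'' to ``$\beta\N$ embeds in $E(X)$'' deserves an explicit sentence.
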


\begin{rem}
    Similar to Remark~\ref{rem: forward Ellis gp}, we would like to mention that the above statement actually describes 
    \emph{forward} non-tameness; given an invertible system, ``general" non-tameness, i.e., the situation where the full Ellis semigroup has cardinality bigger than $ 2^{\aleph_0}$, is already equivalent to the existence of an infinite independence set in $\Z$. 
    See \cite{FuhrmannKellendonkYassawi2024} for an example of a non-tame system where independence sets can contain at most finitely many positive integers.
\end{rem}

In case of binary subshifts $(X,\sigma)$---that is, $X\ssq \{0,1\}^\N$ and $\sigma$ denotes the left-shift on $X$---item (3) of Theorem~\ref{thm: characterisations of tameness} becomes particularly nice (see \cite[Remark~1.6]{FuhrmannKellendonkYassawi2024} for the details).

\begin{cor}\label{cor: non-tame subshifts}
    Suppose $(X,\sigma)$ is a binary subshift.
    Then $(X,\sigma)$ is non-tame if and only if
    $(A_0,A_1)$ admits an infinite independence set, where $A_0=\{(x_n)\in X\colon x_1=0\}$ denotes the $0$-cylinder and $A_1=\{(x_n)\in X\colon x_1=1\}$ the $1$-cylinder.
\end{cor}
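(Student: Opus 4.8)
The plan is to bootstrap from Theorem~\ref{thm: characterisations of tameness}, using two standard features of binary subshifts: cylinders form a basis of clopen sets, and a finite union of cylinders carrying an infinite independence set can be ``thinned'' down to a single cylinder. The reverse implication is immediate, since $A_0$ and $A_1$ are disjoint closed subsets of $X$, so an infinite independence set for $(A_0,A_1)$ forces $(X,\sigma)$ to be non-tame by $(3)\Rightarrow(1)$ of Theorem~\ref{thm: characterisations of tameness}.

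For the forward implication, assume $(X,\sigma)$ is non-tame. Theorem~\ref{thm: characterisations of tameness} supplies disjoint closed sets $B_0,B_1\ssq X$ admitting an infinite independence set $J$; note $B_0,B_1\neq\emptyset$, as a non-empty independence set forces both sets to be non-empty. Since $X\ssq\{0,1\}^\N$ is zero-dimensional, $B_0$ and $B_1$ can be separated by disjoint clopen sets, each a finite union of cylinders; after refining to a common length $N$ we obtain disjoint word sets $W_0,W_1\ssq\{0,1\}^N$ with $B_i\ssq[W_i]:=\{x\in X:(x_1,\dots,x_N)\in W_i\}$. As independence is monotone under enlarging the target sets, $J$ remains an independence set for $([W_0],[W_1])$.

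Next I would invoke the localisation property of independence sets (see \cite{KerrLi2007}): if a pair of sets, each decomposed as a finite union, has an infinite independence set, then so does some pair obtained by choosing one piece from each union. Applying this with the decompositions $[W_i]=\bigcup_{w\in W_i}[w]$ produces words $w^0\in W_0$ and $w^1\in W_1$ such that $([w^0],[w^1])$ has an infinite independence set $J'$. Because $W_0\cap W_1=\emptyset$ we have $w^0\neq w^1$, so $w^0_p=0$ and $w^1_p=1$ for some coordinate $p\in\{1,\dots,N\}$ (interchanging the labels $0$ and $1$ if necessary).

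Finally, a shift transports this independence to the $1$-cylinders. Set $S:=\{j+p-1:j\in J'\}$, which is infinite. For a finite $F\ssq S$, write $F=\{j+p-1:j\in I\}$ with $I\ssq J'$ finite, and let $\psi\in\{0,1\}^F$ correspond to $\varphi\in\{0,1\}^I$ via $\varphi(j)=\psi(j+p-1)$. Independence of $([w^0],[w^1])$ yields $x\in X$ with $(x_{j+1},\dots,x_{j+N})=w^{\varphi(j)}$ for all $j\in I$, whence $x_{j+p}=w^{\varphi(j)}_p=\varphi(j)$, i.e.\ $\sigma^{j+p-1}(x)\in A_{\varphi(j)}=A_{\psi(j+p-1)}$. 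Thus $x$ witnesses the pattern $\psi$, so $S$ is an infinite independence set for $(A_0,A_1)$. The only genuinely non-routine ingredient here is the localisation property quoted from \cite{KerrLi2007}; the zero-dimensional separation and the shifting argument are elementary, and the statement is otherwise bookkeeping on top of Theorem~\ref{thm: characterisations of tameness}.
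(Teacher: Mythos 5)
Your argument is correct. Note that the paper does not prove this corollary itself but defers to \cite{FuhrmannKellendonkYassawi2024} (Remark~1.6 there); your proof is a correct, self-contained version of the standard reduction: zero-dimensional separation of the closed sets from Theorem~\ref{thm: characterisations of tameness} by clopen unions of cylinders, localisation of the infinite independence set to a single pair of cylinders, and a shift moving a distinguishing coordinate to position one. The one genuinely non-routine ingredient is, as you say, the localisation lemma, which does hold in the infinite-independence-set form (Kerr--Li \cite{KerrLi2007}, Section~8, not merely the positive-density version of their Lemma~3.3); the only cosmetic point is that $W_0\cap W_1=\emptyset$ should be arranged by discarding words whose cylinders are empty in $X$, or else deduced a posteriori from the fact that $[w^0]$ and $[w^1]$ are non-empty and disjoint.
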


\subsection{Floyd-Auslander systems as skew-products}\label{sec: Floyd-Auslander systems}
     In the remainder of this article, we work with a description
     of Floyd-Auslander systems as skew-products which is particularly convenient for our analysis.
      Given a sequence $(p_n)_{n\in \N}\ssq \N_{>1}$, we write $(\Sigma_{(p_n)},\1)$ for the odometer with $\Sigma_{(p_n)}=\prod_{n\in\N}\{0,1,\ldots,p_{n}-1\}$ and $\1=10^\infty\in \Sigma_{(p_n)}$.

For each $n\in\mathbb{N}$ and $j\in[p_n]:=\{0,1,\ldots,p_n-1\}$, let $\l_{j}^n$ be a function belonging to $\{\l_0,\l_1,\l_2\}$, where $\l_0,\l_1$ and $\l_2$ are the following self-maps on the unit interval $[0,1]$
       \[\l_0:x\mapsto\frac{1}{2}x,\quad  \l_1:x\mapsto x,\quad  \l_2:x\mapsto\frac{1}{2}x+\frac{1}{2}.\]
       For $n\in\mathbb{N}$,  set
       \[Q(n):=\{j\in[p_n]:\l^n_j=\l_1\},\quad H_0(n):=\{j\in[p_n]:\l^n_j=\l_0\},\quad H_2(n):=\{j\in[p_n]:\l^n_j=\l_2\}.\]

Consider
       \begin{align*}X&=\{(\a,y)\in\Sigma_{(p_n)}\times [0,1]: \text{ }y=\lim_{n\to\infty}\l_{\a}^{n}(z)\text{ for some }z\in[0,1]\}\\
       &=\bigcap_{n\geq 1}\{(\a,y)\in\Sigma_{(p_n)}\times [0,1]: \text{ }y=\l_{\a}^{n}(z)\text{ for some }z\in[0,1]\},\end{align*}
       where $\l_{\a}^{n}:=\l^1_{\a_1}\circ\ldots\circ\l^n_{\a_n}$.
       Note that $X$ is a compact subset of $\Sigma_{(p_n)}\times [0,1]$
       such that for each $\alpha$, the fibre $L_\alpha:=\{(\alpha,y)\in X\}$ is a singleton or a non-degenerate closed line segment.
      We call a fibre $L_\alpha$ \emph{maximal} if $|\pi_2(L_\a)|\geq |\pi_2(L_\beta)|$ for all $\beta\in \odo$, where $\pi_2$ denotes the projection to the second coordinate.

       Define $T:X\to X$ by
       \[T(\a,y)=(\a+\underline{1},\lim_{n\to\infty}\l_{\a+\underline{1}}^n(z)),\]
       where $z$ is such that $y=\lim_{n\to \infty} \lambda^n_{\alpha}(z)$.

       \begin{defn}\label{defn: Floyd-Auslander system}
           We call $(X,T)$ a Floyd-Auslander system if the following two assumptions are met.
       \begin{itemize}
       	\item There are infinitely many $n$ with $\l_0^n\in\{\l_0,\l_2\}$ and infinitely many $m$ with $\l_{p_m-1}^m\in\{\l_0,\l_2\}$.
\medskip
       	\item There are at most finitely many $n$ such that $Q(n)=\emptyset$.
       \end{itemize}
       \end{defn}     
       The first of the above points implies that  $T$ is continuous, that is, $(X,T)$ is indeed a TDS.
       The second point ensures that $(X,T)$ is not just conjugate to the odometer in the first coordinate.
       Note that if, in addition, $|Q(n)|\geq 2$ for infinitely many $n$, then there are uncountably many maximal fibres.
       Readers may convince themselves that the class of Floyd-Auslander systems as defined above is indeed---up to conjugacy---the one discussed in \cite{HaddadJohnson1997}.

The following theorem characterizes minimality for Floyd--Auslander systems.
See \cite{Auslander,Auslanderbook,HaddadJohnson1997} for a proof.
       \begin{thm}[{cf. \cite[Theorem 3.4]{HaddadJohnson1997}}]\label{thm:FA minimal}
       	A Floyd-Auslander system is minimal if and only if there exist infinitely many $n$ with $|H_0(n)|>0$ and infinitely many $m$ with $|H_2(m)|>0$.
       \end{thm}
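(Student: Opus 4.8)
\emph{Overall plan.} I would prove the two implications separately, both times through the factor map $\pi_1\colon(X,T)\to(\odo,\1)$ onto the minimal odometer. For \emph{necessity} I argue the contrapositive: if the condition fails, then after applying the symmetry $0\leftrightarrow 1$, $\l_0\leftrightarrow\l_2$ we may assume $H_0(n)=\emptyset$ for all $n\geq N$. For $n\geq N$ every $\l^n_j\in\{\l_1,\l_2\}$ fixes the point $1$, so $\l^n_\a(1)=\l^1_{\a_1}\circ\cdots\circ\l^{N-1}_{\a_{N-1}}(1)=:c_\a$ for every $\a\in\odo$ and every $n\geq N-1$; in particular $(\a,c_\a)\in X$. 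Since $\a\mapsto c_\a$ depends only on $\a_1,\dots,\a_{N-1}$ it is continuous, so $M:=\{(\a,c_\a):\a\in\odo\}$ is closed, and $T(\a,c_\a)=(\a+\1,c_{\a+\1})$ shows $M$ is $T$-invariant. Finally $M\subsetneq X$: by the second condition in Definition~\ref{defn: Floyd-Auslander system} some $\a$ has $\a_n\in Q(n)$ for all large $n$, so $L_\a$ is a non-degenerate segment whose top endpoint, the $\l_i$ being increasing, equals $\lim_n\l^n_\a(1)=c_\a$; its bottom endpoint then lies in $X\setminus M$. Hence $(X,T)$ is not minimal.

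\emph{Sufficiency, reduction.} Assume $H_0(n)\neq\emptyset$ and $H_2(m)\neq\emptyset$ each for infinitely many indices, and let $Y\subseteq X$ be a minimal subsystem; I would show $Y=X$. Since $\pi_1(Y)$ is a subsystem of the minimal odometer, $\pi_1(Y)=\odo$, so $Y$ meets every fibre $L_\a$. A fibre $L_\a$ is a singleton precisely when $\a_n\notin Q(n)$ for infinitely many $n$; write $R$ for the set of such $\a$. For $\a\in R$, the fact that $Y$ meets the singleton $L_\a$ forces $L_\a\subseteq Y$, so $D:=\bigcup_{\a\in R}L_\a\subseteq Y$. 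It therefore suffices to prove that $D$ is dense in $X$.

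\emph{Density of $D$.} Fix $(\b,y)\in X$ and $k$ large enough that $Q(n)\neq\emptyset$ for all $n>k$. Since $\l^k_\b$ is an increasing affine bijection of $[0,1]$ onto an interval containing $y$ (because $y\in\bigcap_m\l^m_\b([0,1])$), set $w:=(\l^k_\b)^{-1}(y)$ and let $0.b_1b_2b_3\cdots$ be its binary expansion. I build $\a=\a^{(k)}$ by keeping $\a_i=\b_i$ for $i\leq k$ and, for $i>k$, spelling out the bits $b_1,b_2,\dots$ in order: when the pending bit is $0$ and $H_0(i)\neq\emptyset$ I take $\a_i\in H_0(i)$ and advance to the next bit, when the pending bit is $1$ and $H_2(i)\neq\emptyset$ I take $\a_i\in H_2(i)$ and advance, and otherwise I take $\a_i\in Q(i)$. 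By the two infinitude hypotheses every pending bit is eventually placed, so $\a^{(k)}\in R$; moreover $\l^{k+1}_{\a_{k+1}}\circ\cdots\circ\l^n_{\a_n}([0,1])$ is exactly the dyadic interval cut out by the bits placed up to stage $n$ and hence shrinks to $\{w\}$, so $\pi_2(L_{\a^{(k)}})=\l^k_\b(w)=y$ and $(\a^{(k)},y)\in D$. Letting $k\to\infty$ gives $\a^{(k)}\to\b$, hence $(\b,y)\in\overline D$; thus $D$ is dense, $Y=X$, and $(X,T)$ is minimal.

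\emph{Where the work is.} The soft inputs---odometer minimality pushing $Y$ onto every fibre, singleton fibres lying in $Y$, and continuity of $\a\mapsto c_\a$---are immediate. The one genuinely delicate point is the density step: one must realise an arbitrary vertical target as a binary word written using only the contractions $\l_0$ and $\l_2$ that the hypothesis makes available infinitely often, with $\l_1$ used as a neutral ``wait'' move whenever the next required map is momentarily unavailable, and then check that the resulting nested dyadic intervals genuinely converge to $w$. Organising this bookkeeping---and noting that it is exactly here that both infinitude hypotheses are consumed---is the heart of the proof.
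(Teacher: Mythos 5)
Your proof is correct; the paper itself gives no argument for this theorem (it defers to Auslander and Haddad--Johnson), and your two-sided argument---the invariant top (or bottom) edge $\{(\alpha,c_\alpha)\}$ witnessing non-minimality when $H_0$ or $H_2$ is eventually empty, and minimality via the fact that any minimal set surjects onto the odometer, hence contains all singleton fibres, which your binary-coding construction shows are dense---is essentially the standard proof found in those references. The delicate points you flag (well-definedness of the coding at dyadic $w$, the $\lambda_1$ ``wait'' moves, and the use of both infinitude hypotheses to place every bit) are all handled correctly.
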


\begin{rem}\label{rem: classical Floyd system}
Auslander's variation \cite{Auslander} of Floyd's original example \cite{Floyd1949} is obtained for
$p_n=3$ and $\lambda^n_j=\lambda_j$ for each $n\in\N$ and $j=0,1,2$.
\end{rem}

\noindent
{\textbf{Standing assumption.}}
For convenience, we impose the standing assumption that $1\le |Q(n)|<p_n$ for all $n\in\N$.
In other words, we throughout assume that
\[H_0(n)\cup H_2(n)\neq\emptyset,\text{ for all }n\in\N.\]

Note that for our purposes, this results in no loss of generality: every Floyd-Auslander system $(X,T)$ is conjugate to a Floyd-Auslander system $(\tilde X,\tilde T)$ satisfying the above assumption (see below) and a conjugacy between $(X,T)$ and $(\tilde X,\tilde T)$ naturally induces a semigroup isomorphism between $E(X)$ and $E(\tilde X)$ \cite{Auslanderbook}; consequently, the cardinality of $J^{\textrm{min}}(X)$ equals that of $J^{\textrm{min}}(\tilde X)$.

To see that indeed, every Floyd-Auslander system is conjugate to one satisfying the above, note first that for any fixed $N\in \N$, the Floyd-Auslander system $(X,T)$---over an odometer $(\Sigma_{(p_n)},\underline{1})$ and defined through the collection of maps $\lambda_j^n$ ($n\in\N$, $j\in[p_n]$)---is conjugate to the Floyd-Auslander $(\tilde X,\tilde T)$ over the same odometer but with
\[
{\tilde \lambda}_j^n=\lambda_1 \text{ for } n=1,\ldots,N \text{ and } j\in [p_n] \qquad \text{ and } \qquad {\tilde \lambda}_j^n=\lambda_j^n \text{ otherwise.}
\]
Indeed, a conjugacy $h\:(\tilde X,\tilde T)\to(X,T)$ is given by
\[
h\: (\alpha,\lim_{n\to \infty} {\tilde \lambda}_\alpha^n(z))\mapsto (\alpha,\lim_{n\to \infty} {\lambda}_\alpha^n(z))
\]
for each $z\in[0,1]$.
This implies that every Floyd-Auslander system---where, by definition, there is $N\in\N$ such that $Q(n)\neq \emptyset$ for all $n\geq N$---is conjugate to one with $|Q(n)|\geq 1$ for \emph{all} $n$.

Now, let us assume that $(X,T)$ is such that $1 \leq |Q(n)| \leq p_n$ but with infinite (the finite case can be dealt with similarly) strictly increasing sequences $(n_\ell)$ and $(N_\ell)$ in $\N$ where $n_\ell<N_\ell<n_{\ell+1}$  and $|Q(n_\ell+k)|=p_{n_\ell+k}$ for $0\leq k<N_\ell-n_\ell$ while $|Q(N_\ell)|<p_{N_\ell}$.
Set $\tilde p_{N_\ell}= p_{n_\ell}\cdot p_{n_\ell+1}\cdots p_{N_\ell}$ for each $\ell$
and $\tilde p_n=p_n$ whenever $N_\ell<n<n_{\ell+1}$ for some $\ell$---that is, whenever $n$ is not between $n_\ell$ and $N_\ell$ for any $\ell$.
Let $\mc N=\{n\in\N\: N_\ell\leq n<n_{\ell+1}\text{ for some }\ell\in \N\}$
and put $\tilde \Sigma=\prod_{n\in \mc N}[\tilde p_n]$.
Then the odometer $(\tilde \Sigma,\underline{1})$ is conjugate to $(\Sigma_{(p_n)},\underline{1})$ through the canonical bijection between $\tilde \Sigma$ and $\Sigma$.
Further, if we set
\[\tilde \lambda_j^{N_\ell}=\lambda^{n_\ell}_{j_{n_\ell}}\circ \lambda^{n_\ell+1}_{j_{n_\ell}+1}\circ \ldots \circ \lambda^{N_\ell}_{j_{N_\ell}}=\lambda^{N_\ell}_{j_{N_\ell}} \quad (\ell \in \N, \ j\in [\tilde p_{N_\ell}]),
\]
 where $j=j_{n_\ell}+j_{n_\ell+1}\cdot p_{n_\ell}+\ldots+j_{N_\ell}\cdot p_{n_\ell}\cdot p_{n_\ell+1}\cdots p_{N_\ell-1}$, and 
 \[
 \tilde \lambda_j^{n}=\lambda_j^{n} \text{ for all } n\in \mc N\setminus \{N_\ell\: \ell\in \N\},
 \]
 then we obtain that the associated Floyd-Auslander system $(\tilde X,\tilde T)$ is conjugate
 to $(X,T)$ and satisfies $1\leq|Q(n)|<p_n$ for all $n$.

\section{Preparations}\label{sec: preparations}
In this section we develop tools that will be used repeatedly in the proof of Theorem~\ref{thm: main 1 intro}: a convenient characterisation of (non-)tameness and a technical condition ensuring the existence of many minimal idempotents for Floyd-Auslander systems.

\subsection{An alternative characterisation of tameness}\label{sec: choice domains}
In the following, we establish an alternative characterisation of (non-)tameness.
Towards proving our main result Theorem~\ref{thm: main 1 intro}, we only use some part of the following discussion (specifically, Lemma~\ref{lem: uncountable choice domains}).
However, as we believe that our alternative view on tameness is of independent interest, we collect a few basic facts here which go slightly beyond the bare minimum of what is actually needed to prove Theorem~\ref{thm: main 1 intro}.

For notational clarity and brevity, we restrict our discussion to binary subshifts $(X,\sigma)$ (i.e.\ $X\ssq \{0,1\}^\N$) and, as in Section~\ref{sec: prelims tameness elis semigp} and without any further mentioning, \emph{forward} tameness.
The generalisation to general TDS and the translation to ``forward-and-backward" tameness is immediate.
Let us also remark that although our main result does not actually deal with subshifts, this is exactly the setup we need for its proof, see Section~\ref{sec: many idempotents}.

\begin{defn}
    Given a binary subshift $(X,\sigma)$, we call $\mathcal T\ssq X$ \emph{a choice domain} if for all $m,n \in \N$, any subset of $n$ distinct points
$(x^1_\ell)_{\ell\in \N},\ldots,(x^n_\ell)_{\ell\in \N}\in \mathcal T$, and any collection of choice functions $(\phi^1_k)_{k=1}^m,\ldots,(\phi^n_k)_{k=1}^m\in \{0,1\}^m$  there are times $m<\tau_1<\ldots <\tau_m$ such that
\[
 x^i_{\tau_k}=\phi^i_k \qquad (i=1,\ldots,n, \ k=1,\ldots,m).
\]
We call such times $m<\tau_1<\ldots <\tau_m$  \emph{$m$-realising} for $(x^1_\ell),\ldots, (x^n_\ell)$ and $(\phi^1_k),\ldots,(\phi^n_k)$.
We further call the choice domain  $\mathcal T$ \emph{maximal} if whenever $\mathcal T'\ssq X$ is a choice domain and $\mathcal T\ssq \mathcal T'$, then $\mathcal T=\mathcal T'$.
\end{defn}

\begin{rem}
Note that this definition differs slightly from the introduction by imposing the assumption $\tau_1 > m$. 
We introduce this additional constraint here solely for technical convenience; the fact that $m$ can be chosen arbitrarily large implies that both definitions are indeed equivalent.
\end{rem}
\begin{rem}
    Note that being a choice domain is a property of finite character so that Zorn's Lemma indeed ensures the existence of a maximal element.
    Note further that the existence of non-empty choice domains (and hence, non-emptiness of maximal choice domains) is equivalent to the existence of a sequence $(x_n)\in X$ which is not eventually constant.
\end{rem}

\begin{prop}\label{prop: alternative characterisation of choice domains}
 Consider a binary subshift $(X,\sigma)$ and $\mathcal T\ssq X$.
 The following are equivalent.
 \begin{enumerate}
     \item[(a)] $\mathcal T$ is a choice domain.
     \medskip
     \item[(b)] For each finite subset of distinct points
$(x^1_\ell)_{\ell\in \N},\ldots,(x^n_\ell)_{\ell\in \N}\in \mathcal T$, each $m\in\N$ and $(\phi^i)_{i=1}^n\in\{0,1\}^n$, there is $\tau >m$ with
\[
 x^i_{\tau}=\phi^i.
\]

\item[(c)] For each finite subset of distinct points
$(x^1_\ell)_{\ell\in \N},\ldots,(x^n_\ell)_{\ell\in \N}\in \mathcal T$, and each
collection of choice functions 
$(\phi^1_k)_{k\in \N},\ldots,(\phi^n_k)_{k\in \N}\in \{0,1\}^\N$, there is a strictly increasing sequence 
$(\tau_k)_{k\in \N}$ in $\N$ such that
\[
 x^i_{\tau_k}=\phi^i_k \qquad (i=1,\ldots,n, \ k\in\N).
\]
 \end{enumerate}
\end{prop}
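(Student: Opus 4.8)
The plan is to prove the cycle of implications $(a)\Rightarrow(c)\Rightarrow(b)\Rightarrow(a)$, since $(a)\Rightarrow(c)$ and $(c)\Rightarrow(b)$ are the substantive directions while $(b)\Rightarrow(a)$ is a matter of reorganising the combinatorics.

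\textbf{$(a)\Rightarrow(c)$.} Fix distinct points $(x^1_\ell),\ldots,(x^n_\ell)\in\mathcal T$ and choice functions $(\phi^i_k)_{k\in\N}$ for $i=1,\ldots,n$. I would build the sequence $(\tau_k)$ greedily. Having already selected $\tau_1<\ldots<\tau_{k-1}$, apply the choice domain property with $m:=\max\{k,\tau_{k-1}\}$ and the single-column choice $\phi^i:=\phi^i_k$ ($i=1,\ldots,n$): the case $m'=1$ of the definition (one time $\tau$, required to satisfy $\tau>m$) yields some $\tau_k>m\ge\tau_{k-1}$ with $x^i_{\tau_k}=\phi^i_k$. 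Iterating produces a strictly increasing $(\tau_k)_{k\in\N}$ with the desired property. (Strictly, the ``$m<\tau_1$'' clause of the refined definition is exactly what lets the greedy step overshoot any finite threshold.)

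\textbf{$(c)\Rightarrow(b)$.} This is immediate: given distinct points, $m\in\N$ and a single tuple $(\phi^i)_{i=1}^n\in\{0,1\}^n$, extend it to constant choice functions $\phi^i_k:=\phi^i$ for all $k$, apply $(c)$ to get a strictly increasing $(\tau_k)$, and pick any $\tau_k>m$.

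\textbf{$(b)\Rightarrow(a)$.} This is the main obstacle, as it requires promoting a statement about realising a \emph{single} column past a threshold to one realising $m$ prescribed columns in strictly increasing order. Fix $m,n\in\N$, distinct points $(x^1_\ell),\ldots,(x^n_\ell)\in\mathcal T$ and choice functions $(\phi^i_k)_{k=1}^m$. I would construct $\tau_1<\ldots<\tau_m$ inductively. First apply $(b)$ with threshold $m$ and the tuple $(\phi^i_1)_{i=1}^n$ to obtain $\tau_1>m$ with $x^i_{\tau_1}=\phi^i_1$. Given $\tau_1<\ldots<\tau_{k-1}$, apply $(b)$ with threshold $\tau_{k-1}$ and tuple $(\phi^i_k)_{i=1}^n$ to obtain $\tau_k>\tau_{k-1}$ with $x^i_{\tau_k}=\phi^i_k$ for all $i$. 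After $m$ steps we have $m<\tau_1<\ldots<\tau_m$ with $x^i_{\tau_k}=\phi^i_k$ for all $i=1,\ldots,n$ and $k=1,\ldots,m$, which is exactly the choice domain property. The point making this work is that in the definition of a choice domain the columns may be realised at \emph{arbitrary} times $\tau_1<\ldots<\tau_m$ with no prescribed gaps, so processing one column at a time and only insisting each new time exceed the previous one suffices; condition $(b)$ supplies precisely such a time for each column individually, with its free threshold parameter absorbing the ordering constraint.
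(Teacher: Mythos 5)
Your proposal is correct and rests on the same key idea as the paper's proof, namely the greedy iteration of single-time realisations with ever larger thresholds (the paper runs the cycle as $(a)\Rightarrow(b)\Rightarrow(c)\Rightarrow(a)$ and details $(b)\Rightarrow(c)$, whereas you run it in the opposite orientation with the substance in $(b)\Rightarrow(a)$, but the argument is the same). One small point of hygiene in your step $(a)\Rightarrow(c)$: the definition of a choice domain has a \emph{single} parameter $m$ serving both as the number of times and as the lower bound on $\tau_1$, so to get one realising time beyond a threshold $M$ you should invoke the definition with $m=M$ and the constant choice functions $\phi^i_j:=\phi^i_k$ for $j=1,\ldots,M$ and keep $\tau_1>M$, rather than appealing to a ``one column, threshold $M$'' case that is not literally an instance of the definition.
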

\begin{proof}
We only show how (b) implies (c); the other parts are even more immediate.
Consider $(x^1_\ell)_{\ell\in \N},\ldots,(x^n_\ell)_{\ell\in \N}\in \mathcal T$ and
$(\phi^1_k)_{k\in \N},\ldots,(\phi^n_k)_{k\in \N}\in \{0,1\}^\N$ as in (c).
By (b), there is $\tau_1 \in \mathbb{N}$ with 
\[
x^i_{\tau_1} = \phi^i_1 \qquad (i = 1, \ldots, n).
\]
Analogously, there is $\tau_2 > \tau_1$ with 
\[
x^i_{\tau_2} = \phi^i_2 \qquad (i = 1, \ldots, n).
\]
Continuing like this, we obtain $(\tau_k)_{k \in \mathbb{N}}$ as in (c).
\end{proof}

\begin{lem}\label{lem: uncountable choice domains}
    If $\mathcal T$ is a maximal choice domain of the full shift 
    $(\{0,1\}^\N,\sigma)$, then $\mathcal T$ is uncountable.
\end{lem}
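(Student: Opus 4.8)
The plan is to argue by contradiction: assume $\mathcal T=\{y^1,y^2,\ldots\}$ is countable and manufacture a single sequence $z\in\{0,1\}^\N\setminus\mathcal T$ for which $\mathcal T\cup\{z\}$ is still a choice domain, contradicting the maximality of $\mathcal T$.

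First I would invoke Proposition~\ref{prop: alternative characterisation of choice domains}, whose equivalence (a)$\Leftrightarrow$(b) reduces the task to the following: for every finite family of distinct points $w^1,\dots,w^n\in\mathcal T\cup\{z\}$, every $m\in\N$ and every $\phi\in\{0,1\}^n$, there should be $\tau>m$ with $w^i_\tau=\phi^i$ for all $i$. Families not containing $z$ are already handled since $\mathcal T$ is a choice domain, so the only genuinely new constraints come from families $z,y^{i_1},\dots,y^{i_k}$ with $k\ge 0$ (the $y^{i_j}$ distinct, hence automatically distinct from $z$). Writing $F=\{y^{i_1},\dots,y^{i_k}\}$ and $S_{F,\psi}=\{\tau\in\N:\,y^{i_j}_\tau=\psi^j\text{ for all }j\}$ (with $S_{\emptyset,\cdot}=\N$), part (b) applied to $\mathcal T$ shows that each $S_{F,\psi}$ is unbounded, hence infinite. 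Thus all that is needed of $z$ is: (i) $z\ne y^j$ for every $j$; and (ii) for each of the countably many pairs $(F,\psi)$ and each $\epsilon\in\{0,1\}$, the set $S_{F,\psi}\cap z^{-1}(\epsilon)$ is infinite.

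To produce such a $z$, the quickest route is probabilistic: equip $\{0,1\}^\N$ with the $(\tfrac12,\tfrac12)$-Bernoulli measure $\mu$; for a fixed infinite set $S\ssq\N$ and a fixed $\epsilon$, the bits $(z_\tau)_{\tau\in S}$ are i.i.d.\ and fair, so by the second Borel--Cantelli lemma $\{z:\,S\cap z^{-1}(\epsilon)\text{ is infinite}\}$ has full measure; intersecting these full-measure events over the countably many triples $(F,\psi,\epsilon)$ and discarding the $\mu$-null set $\mathcal T$ leaves a full-measure set of admissible $z$, from which we pick one. A hands-on alternative is a dovetailing construction of $z$ coordinate by coordinate: maintaining a finite partial assignment, process a schedule in which every triple $(F,\psi,\epsilon)$ occurs infinitely often and every index $j$ at least once, at an $(F,\psi,\epsilon)$-step reserve a fresh $\tau\in S_{F,\psi}$ beyond all currently-assigned coordinates and set $z_\tau=\epsilon$, at a $j$-step use a fresh coordinate $n$ to record a disagreement $z_n=1-y^j_n$, and fill unassigned coordinates arbitrarily at the end.

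Either way, the resulting $z$ meets (i) and (ii), so by Proposition~\ref{prop: alternative characterisation of choice domains}(b) the set $\mathcal T\cup\{z\}$ is a choice domain strictly containing $\mathcal T$---contradicting maximality---whence $\mathcal T$ is uncountable. The points requiring care are: remembering to secure $z\notin\mathcal T$ (free from $\mu(\mathcal T)=0$ in the probabilistic argument, or from the disagreement clauses in the combinatorial one), and routing everything through characterisation (b) at the outset, which is exactly what spares us from having to realise the full chain $m<\tau_1<\dots<\tau_m$ by hand. I expect this last bookkeeping, rather than any substantial difficulty, to be the only obstacle.
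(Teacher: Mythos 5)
Your proposal is correct and follows essentially the same route as the paper: assume $\mathcal T$ countable, use Proposition~\ref{prop: alternative characterisation of choice domains}(b) to reduce the choice-domain condition for $\mathcal T\cup\{z\}$ to making each set $S_{F,\psi}\cap z^{-1}(\epsilon)$ infinite while keeping $z\notin\mathcal T$, and produce such a $z$ by a recursive/dovetailing construction, contradicting maximality. Your probabilistic shortcut (Borel--Cantelli for the Bernoulli measure, plus the fact that the countable set $\mathcal T$ is null) is a valid and slightly slicker way to execute the construction step, but the overall argument coincides with the paper's diagonal argument.
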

\begin{proof}
Assume, for a contradiction, that $\mathcal{T} = \{x^1, x^2, \ldots\}$ is an enumeration of a maximal choice domain. 
Without loss of generality, we may assume that $\mathcal{T}$ is infinite; the finite case can be handled analogously.
By a diagonal argument, we can recursively construct a sequence $y \in \{0,1\}^{\mathbb{N}}$ such that $y \notin \mathcal{T}$ but $\mathcal{T} \cup \{y\}$ is still a choice domain, thereby contradicting the maximality of $\mathcal{T}$.

We provide the details for the reader's convenience.
Suppose we have already carried out the first $n$ stages of the construction for some $n \geq 1$ in a way that the first $N$ entries $y_1, \ldots, y_N$  (for some $N \in \mathbb{N}$) of $y$ have been specified.
Since $\mathcal{T}$ is a choice domain, for each $\phi = (\phi^i)_{i=1}^{n+1} \in \{0,1\}^{n+1}$ we can find a time $\tau_\phi > N$ such that
\[
x^i_{\tau_\phi} = \phi^i \qquad (i = 1, \ldots, n+1).
\]
Similarly, again for each $\phi = (\phi^i)_{i=1}^{n+1} \in \{0,1\}^{n+1}$ we can find $\tau_\phi' > \max_{\psi \in \{0,1\}^{n+1}} \tau_\psi$ satisfying
\[
x^i_{\tau_\phi'} = \phi^i \qquad (i = 1, \ldots, n+1).
\]
In the $n+1$-th stage of the construction, we now specify $y_\ell$ for $\ell \in \{N+1, \ldots, \max_{\phi \in \{0,1\}^{n+1}} \tau_\phi'\}$
as
\[
y_\ell =
\begin{cases}
1, & \text{if } \ell = \tau_\phi' \text{ for some } \phi \in \{0,1\}^{n+1},\\[2mm]
0, & \text{otherwise.}
\end{cases}
\]

\smallskip
\noindent Proceeding recursively, we obtain a sequence $y \in \{0,1\}^{\mathbb{N}}$ with $y \notin \mathcal T$ (since at each stage of the construction, $0=y_{\tau_{\phi}}\neq y_{\tau_{\phi}'}=1$) and such that $\mathcal{T} \cup \{y\}$ still satisfies condition~(b) of Proposition~\ref{prop: alternative characterisation of choice domains}.
The statement follows.
\end{proof}

Note that if $(X,\sigma)$ is non-tame, then Corollary~\ref{cor: non-tame subshifts} yields the existence of a strictly increasing sequence $(t_n)$ in $\N$ along which the binary full-shift is embedded in $X$.
More precisely, for each $\phi\in \{0,1\}^\N$, there is
$x^\phi\in X$ with $x^\phi_{t_n}=\phi_n$.
As a consequence, any choice domain $\mathcal T$ of the fullshift injects into a choice domain $\mathcal T_X$ in $X$ given by $\mathcal T_X=\{x^\phi\in X\: \phi\in \mathcal T\}$.
We hence obtain the following corollary to the previous lemma.
\begin{cor}
    Suppose $(X,\sigma)$ is a binary subshift.
If $(X,\sigma)$ non-tame, then it allows for an uncountable choice domain.
\end{cor}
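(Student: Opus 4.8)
The plan is to transport an uncountable choice domain of the full shift into $X$ along an infinite independence set, using Corollary~\ref{cor: non-tame subshifts} and Lemma~\ref{lem: uncountable choice domains} (both of which we may assume).

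First I would extract the combinatorial data witnessing non-tameness. By Corollary~\ref{cor: non-tame subshifts}, the pair $(A_0,A_1)$ of the $0$- and $1$-cylinders admits an infinite independence set $\{t_1<t_2<\cdots\}\ssq\N$. From the (finite) defining property of an independence set, a routine compactness argument --- take a convergent subnet of points in $X$ realising longer and longer prescribed initial patterns along $(t_n)$ --- yields, for every $\phi\in\{0,1\}^\N$, a point $x^\phi\in X$ with $x^\phi_{t_n}=\phi_n$ for all $n\in\N$. The assignment $\phi\mapsto x^\phi$ is injective: if $\phi_n\neq\phi'_n$ then $x^\phi_{t_n}=\phi_n\neq\phi'_n=x^{\phi'}_{t_n}$, so $x^\phi\neq x^{\phi'}$.

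Next I would invoke Lemma~\ref{lem: uncountable choice domains} to fix an uncountable maximal choice domain $\mathcal T\ssq\{0,1\}^\N$ of the full shift, and set $\mathcal T_X:=\{x^\phi\: \phi\in\mathcal T\}\ssq X$. By injectivity $\mathcal T_X$ is uncountable, so it remains only to verify that $\mathcal T_X$ is a choice domain of $(X,\sigma)$. Given $m,n\in\N$, distinct points $x^{\phi^1},\ldots,x^{\phi^n}\in\mathcal T_X$ (equivalently, distinct $\phi^1,\ldots,\phi^n\in\mathcal T$), and choice functions $(\psi^i_k)_{k=1}^m\in\{0,1\}^m$ for $i=1,\ldots,n$, apply the choice-domain property of $\mathcal T$ to obtain $m$-realising times $m<\sigma_1<\cdots<\sigma_m$ with $\phi^i_{\sigma_k}=\psi^i_k$ for all $i,k$. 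Put $\tau_k:=t_{\sigma_k}$. Since $(t_j)$ is strictly increasing in $\N$ we have $t_j\geq j$, so $\tau_1<\cdots<\tau_m$ and $\tau_1=t_{\sigma_1}\geq\sigma_1>m$; moreover
\[
 x^{\phi^i}_{\tau_k}=x^{\phi^i}_{t_{\sigma_k}}=\phi^i_{\sigma_k}=\psi^i_k\qquad(i=1,\ldots,n,\ k=1,\ldots,m).
\]
Hence $\tau_1<\cdots<\tau_m$ are $m$-realising for these points and choice functions, so $\mathcal T_X$ is a choice domain, which proves the claim.

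There is no substantial obstacle here once Lemma~\ref{lem: uncountable choice domains} and Corollary~\ref{cor: non-tame subshifts} are available; the only points needing mild care are the compactness step passing from the finite, pattern-by-pattern definition of an independence set to a single point $x^\phi$ realising an entire infinite binary sequence along $(t_n)$, and the bookkeeping showing that pulling realising times back along the strictly increasing sequence $(t_n)$ preserves both strict monotonicity and the normalisation $\tau_1>m$.
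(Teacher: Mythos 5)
Your proposal is correct and follows essentially the same route as the paper: the paper likewise obtains the sequence $(t_n)$ from Corollary~\ref{cor: non-tame subshifts}, realises each $\phi\in\{0,1\}^\N$ by a point $x^\phi$ with $x^\phi_{t_n}=\phi_n$, and transports the uncountable maximal choice domain of Lemma~\ref{lem: uncountable choice domains} into $X$ via $\phi\mapsto x^\phi$. Your write-up merely makes explicit the compactness step and the verification (including the normalisation $\tau_1>m$) that the paper leaves to the reader.
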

In fact, the converse holds as well, which completes the proof of Theorem~\ref{thm: choice domains intro}.
\begin{proof}[{Proof of Theorem~\ref{thm: choice domains intro}}]
It remains to show that given a binary subshift $(X,\sigma)$,
the existence of an uncountable choice domain $\mc T$ implies non-tameness.
We do this by showing that $E(X)>2^{\aleph_0}$, see Theorem~\ref{thm: characterisations of tameness}.
Specifically,
we show that for each function $f\in \{0,1\}^\mathcal T$, there is
$\eta \in E(X)$ such that 
\begin{align}\label{eq: realising functions over choice domain}
    \eta(x)\in A_{f(x)} \quad(x\in \mathcal T),
\end{align}
where $A_i=\{(y_\ell)\in X\:y_1=i\}$ for $i=0,1$.

To that end, consider $[\mathcal T]^{<\omega}$---the collection of finite subsets of $\mathcal T$---equipped with the direction given by inclusion.
Given $f\in \{0,1\}^\mathcal T$, we define a net $\{t_{\{x^1,\ldots,x^n\}}\}_{\{x^1,\ldots,x^n\}\in [\mathcal T]^{<\omega}}$ in $\N$ by picking $\tau=t_{\{x^1,\ldots,x^n\}}$ such that
\begin{align*}
x^i_\tau = f(x^i), \quad \text{in other words, such that} \quad 
\sigma^\tau x^i \in A_{f(x^i)} \qquad (i=1,\ldots,n).
\end{align*}
Then, any accumulation point $\eta$ of $\{\sigma^{t_{\{x^1,\ldots,x^n\}}}\}_{\{x^1,\ldots,x^n\}\in [\mathcal T]^{<\omega}}$ satisfies \eqref{eq: realising functions over choice domain} and
the statement follows.
\end{proof}

\subsection{A sufficient criterion for many minimal idempotents in Floyd-Auslander systems}
Let $(X,T)$ be a Floyd--Auslander system.
In this subsection, we present a condition which ensures that the set of minimal idempotents $J^{\mathrm{min}}(X)$ has cardinality strictly larger than $2^{\aleph_0}$.
In Section~\ref{sec: many idempotents}, we will then see that this sufficient condition is satisfied by all minimal Floyd-Auslander systems with uncountably many interval fibres.

We begin with some terminology.
 An uncountable subset $\mathcal{A}\subset \Sigma_{(p_n)}$ such that $L_{\a}$ is maximal for every $\a\in\mathcal{A}$ is called \emph{\good}\ if there exist strictly increasing sequences $\{q_\ell\}_{\ell\in\N}$, $\{k_\ell\}_{\ell\in\N}$ and $\{k_\ell'\}_{\ell\in\N}$ with the following properties.
\begin{itemize}
\item[(P1)] For every $\ell\in\N$ there exist $\xi_{q_\ell}^1,\xi_{q_\ell}^2\in Q(q_\ell)$ with
\[
0\le \xi_{q_\ell}^1<\xi_{q_\ell}^2\le p_{q_\ell}-1
\]
and such that for every $\a\in\mathcal{A}$,
\[
\a_{q_\ell}=\xi_{q_\ell}^1\quad (\ell\in\N).
\]

\item[(P2)] 
$H_{2}(k_\ell)\neq\emptyset$ for all $\ell\in\N$, and there exists $\zeta_{k_\ell}\in Q(k_\ell)$ such that for every $\a\in\mathcal{A}$,
\[
\a_{k_\ell}=\zeta_{k_\ell}\quad (\ell\in\N);
\]
 $H_{0}(k_\ell')\neq\emptyset$ for all $\ell\in\N$, and there exists $\zeta_{k_\ell'}\in Q(k'_\ell)$ such that for every $\a\in\mathcal{A}$,
\[
\a_{k'_\ell}=\zeta_{k'_\ell}\quad (\ell\in\N).
\]
\end{itemize}
We now say a Floyd-Auslander system $(X,T)$ satisfies property ($\ast$) if one of the following holds.

\vspace{-15pt}
\noindent\begin{minipage}[t]{0.92\textwidth}
\phantom{There exist  \( a\neq b \in [0,1] \)   and an uncountable subset $\mathcal{A} \subset \Sigma_{(p_n)}$, for which \( L_\alpha \) has}\tikzmark{start}%
\begin{enumerate}
    \item There exists $a\in(0,1)$ and a \good\ subset $\mathcal{A} \subset \Sigma_{(p_n)}$  such that for any $B \subset \mathcal{A}$, there is \( f_B \in J(X) \) and  a partition $B=B_1 \sqcup B_2$ by some (possibly empty) subsets $B_1,B_2\ssq B$ such that
\begin{itemize}
\item $f_B(L_\a)=(\a,b)$ if $\a \in \mathcal A \setminus B$ and where $b=b(\a)\in [0,a]$;
\item $f_B(L_\a)=(\a,c)$ if $\a \in B_1$ and where $c=c(\a)>a$;
    \item $\left.f_B\right|_{L_\a}=\left.\Id\right|_{L_\a}$ if $\a \in B_2$.
\end{itemize}
\item There exists $a\in(0,1)$ and a \good\ subset $\mathcal{A} \subset \Sigma_{(p_n)}$ such that for any $B \subset \mathcal{A}$, there is \( f_B \in J(X) \) and  a partition $B=B_1 \sqcup B_2$ by some (possibly empty) subsets $B_1,B_2\ssq B$ such that
\begin{itemize}
\item $f_B(L_\a)=(\a,b)$ if $\a \in \mathcal A \setminus B$ and where $b=b(\a)\in [a,1]$;
\item $f_B(L_\a)=(\a,c)$ if $\a \in B_1$ and where $c=c(\a)<a$;
    \item $\left.f_B\right|_{L_\a}=\left.\Id\right|_{L_\a}$ if $\a \in B_2$.
\end{itemize}
\end{enumerate}

\phantom{There exist  \( a\neq b \in [0,1] \)   and an uncountable subset $\mathcal{A} \subset \Sigma_{(p_n)}$, for which \( L_\alpha \) has}\tikzmark{end}
\end{minipage}

\tikz[remember picture, overlay]{
  \draw[decorate, decoration={brace, amplitude=7pt}]
    ($(start.north east)+(1.9,-0.2)$) -- ($(end.south east)+(1.9,0.5)$)
    node[midway,right=5pt] {\large $(\ast)$};
}

\vspace{-30pt}
Note that here and at various other places in this work,
given an idempotent $f$ that collapses a fibre $L_\a$ (that is, $|f(L_\a)|=1$), we identify the singleton image $f(L_\a)$ with its unique element.

In Section~\ref{sec: many idempotents}, we will prove that if $\L:=\{n\in\N:|Q(n)|\ge2 \}$ is infinite, then the Floyd-Auslander system $(X,T)$ satisfies property ($\ast$).
In the remainder of this paragraph, we show that property ($\ast$) implies $|J^{\mathrm{min}}(X)|>2^{\aleph_0}$.
To that end, observe the following basic facts whose straightforward proofs are left to the reader.
\begin{fact}\label{fact1}
Let $(X,T)$ be a Floyd--Auslander system. For any \( f \in J(X) \) and \( \alpha \in \Sigma_{(p_n)} \), we have
\[
\left.f\right|_{L_\alpha} = \left. \Id\right|_{L_\alpha} \quad \text{or} \quad f(L_\alpha) = (\alpha, z) \text{ for some } z \in L_\alpha.
\]
\end{fact}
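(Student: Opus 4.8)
The plan is to reduce the statement to the elementary fact that an idempotent affine self-map of a non-degenerate interval is either the identity or constant. Two structural properties of $f\in J(X)$ bring us there: $f$ fixes the odometer factor, and $f$ restricts to an affine map on every fibre.

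First I would record that the first-coordinate projection $\pi_1\colon X\to\odo$ is a factor map onto the odometer $(\odo,\1)$. Being minimal equicontinuous, the odometer is distal, so $E(\odo)$ is the group of rotations $\{\,y\mapsto y+\gamma\colon\gamma\in\odo\,\}$; moreover $\pi_1$ induces a semigroup homomorphism $\pi_1^{*}\colon E(X)\to E(\odo)$ with $\pi_1^{*}(T^n)=(y\mapsto y+n\1)$ and $\pi_1\circ g=\pi_1^{*}(g)\circ\pi_1$ for all $g\in E(X)$. For $f\in J(X)$ the element $\pi_1^{*}(f)$ is an idempotent of a group, hence the identity, so that $\pi_1(f(x))=\pi_1(x)$ for all $x\in X$; equivalently, $f(L_\alpha)\ssq L_\alpha$ for every $\alpha\in\odo$.

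Next I would analyse $f|_{L_\alpha}$. If $L_\alpha$ is a singleton both alternatives in the statement hold trivially, so suppose $L_\alpha=\{\alpha\}\times I_\alpha$ with $I_\alpha$ a non-degenerate closed interval. The point that needs a little care is that $T^n$ acts on the second coordinate of $L_\alpha$ by an affine map: putting $\Phi_\beta(t):=\lim_{m\to\infty}\lambda^m_\beta(t)$, each $\lambda^m_\beta$ is affine and order-preserving with $\lambda^{m+1}_\beta([0,1])\ssq\lambda^m_\beta([0,1])$, so the slopes decrease and the left endpoints increase; hence $\Phi_\beta$ is a well-defined affine surjection $[0,1]\to\pi_2(L_\beta)$, which is bijective precisely when $L_\beta$ is non-degenerate. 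Since $I_\alpha$ is non-degenerate, $\Phi_\alpha$ is invertible, and unwinding the definition of $T$ shows that $T^n$ sends $(\alpha,y)$ to $(\alpha+n\1,\ \Phi_{\alpha+n\1}(\Phi_\alpha^{-1}(y)))$, an affine (possibly degenerate) function of $y$. Writing $f=\lim_i T^{n_i}$ as a pointwise limit of a net, $f|_{L_\alpha}$ is then a pointwise limit of affine maps $y\mapsto a_iy+b_i$ with $a_i\ge 0$; evaluating at two distinct points of $I_\alpha$ forces $a_i\to a\ge 0$ and $b_i\to b$, so $f$ acts on $L_\alpha$ by the affine map $(\alpha,y)\mapsto(\alpha,ay+b)$, carrying $I_\alpha$ into $I_\alpha$.

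Finally, since $f^2=f$ and $f(L_\alpha)\ssq L_\alpha$, the map $y\mapsto ay+b$ is an idempotent self-map of $I_\alpha$, so $a^2=a$ and $ab=0$ on the non-degenerate interval $I_\alpha$. Hence either $a=1$ and $b=0$, giving $f|_{L_\alpha}=\Id|_{L_\alpha}$, or $a=0$, giving $f(L_\alpha)=(\alpha,b)$ with $b\in I_\alpha$ — exactly the two alternatives in the statement. The only genuinely non-trivial ingredient is the affineness of $T^n|_{L_\alpha}$; everything else is bookkeeping, and one can even bypass the $\Phi_\beta$ computation by observing that $T$, hence $T^n$, is a homeomorphism of $X$ carrying $L_\alpha$ onto $L_{\alpha+n\1}$ by a monotone map assembled from affine pieces, hence affine.
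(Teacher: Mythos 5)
Your proof is correct and is essentially the argument the paper has in mind when it declares Fact~\ref{fact1} straightforward: the idempotent projects to an idempotent of the group $E(\Sigma_{(p_n)})$ and hence fixes the odometer coordinate, it acts on each non-degenerate fibre as a pointwise limit of the affine maps $T^{n}|_{L_\alpha}\colon y\mapsto \Phi_{\alpha+n\1}(\Phi_\alpha^{-1}(y))$ and is therefore itself affine, and an idempotent affine self-map of a non-degenerate interval is either the identity or constant. Only the closing aside is loose---a monotone map ``assembled from affine pieces'' need not be affine---but your main argument via $\Phi_\beta$ does not rely on it.
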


\begin{fact}\label{fact2}
Let $(X,T)$ be a Floyd--Auslander system. For any \( \alpha \in \Sigma_{(p_n)} \), there exists \( f \in J(X) \) with
\[
f(L_\alpha) = (\alpha, z) \text{ for some } z \in L_\alpha.
\]
\end{fact}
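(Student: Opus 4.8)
The plan is to produce, for a given $\alpha\in\Sigma_{(p_n)}$, an element of $E(X)$ that is idempotent and collapses the fibre $L_\alpha$ to a single point. I would first record the standard observation that every idempotent in $E(X)$ fixes the first coordinate: the factor map onto the odometer $(\Sigma_{(p_n)},\1)$ carries $f\in J(X)$ to an idempotent of $E(\Sigma_{(p_n)})$, and the latter is a group of translations whose only idempotent is the identity. Hence, if $f\in J(X)$ satisfies $|f(L_\alpha)|=1$, then $f(L_\alpha)=(\alpha,z)$ for some $z$, and since $(\alpha,z)\in f(X)\ssq X$ we get $z\in\pi_2(L_\alpha)$, i.e.\ $f(L_\alpha)\in L_\alpha$, which is exactly what is required. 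So it suffices to find an idempotent in
\[
S:=\{g\in E(X)\colon g(L_\alpha)\text{ is a singleton}\}.
\]

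I would then verify that $S$ is a non-empty closed subsemigroup of $E(X)$ and invoke Lemma~\ref{lem:existence of idempotent}. That $S$ is a subsemigroup is immediate: if $g\in S$ and $h\in E(X)$, then $(hg)(L_\alpha)=h\big(g(L_\alpha)\big)$ is the $h$-image of a single point, hence a single point. For closedness the key point is monotonicity: each iterate $T^n$ sends $L_\alpha$ onto $L_{\alpha+n\1}$ by a map that is non-decreasing in the second coordinate, being built from the increasing affine maps $\lambda_0,\lambda_1,\lambda_2$ (and inverses thereof); since a pointwise limit of non-decreasing maps is non-decreasing, the same holds for the second-coordinate action of every $g\in E(X)$ on $L_\alpha$. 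Therefore $g(L_\alpha)$ is a singleton if and only if $g$ sends the two endpoints of $L_\alpha$ to the same point, which is a closed condition on $g$; hence $S$ is closed.

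Finally, to see $S\neq\emptyset$ (the case where $L_\alpha$ is already a singleton being trivial, as then $S=E(X)$) I would exploit the standing assumption $1\le|Q(n)|<p_n$ together with minimality of the odometer: for each $i\in\N$ fix $\xi_k\in[p_k]\setminus Q(k)$ for $k=1,\dots,i$, and, using density of the forward orbit $\{\alpha+n\1\colon n\in\N\}$ in $\Sigma_{(p_n)}$, choose $n_i$ with $(\alpha+n_i\1)_k=\xi_k$ for $k\le i$. Then $L_{\alpha+n_i\1}$ has at least $i$ contracting initial coordinates, so $|\pi_2(L_{\alpha+n_i\1})|\le 2^{-i}$, and thus $T^{n_i}$ maps $L_\alpha$ into a fibre of diameter $\le 2^{-i}$; any accumulation point $g$ of $(T^{n_i})_i$ in the compact space $E(X)$ then collapses $L_\alpha$, so $g\in S$. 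Applying Lemma~\ref{lem:existence of idempotent} to $S$ produces the desired idempotent $f$, and the reduction in the first paragraph shows $f(L_\alpha)\in L_\alpha$. The whole argument is elementary; the only steps needing a little care are the closedness of $S$ (which rests on the monotonicity of the fibre maps) and the length estimate $|\pi_2(L_{\alpha+n_i\1})|\le 2^{-i}$, which is precisely where the standing assumption is used.
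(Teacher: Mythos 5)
Your proof is correct and is exactly the straightforward argument the paper leaves to the reader: the set $S=\{g\in E(X):\,|g(L_\alpha)|=1\}$ is a closed subsemigroup, it is non-empty because the standing assumption $|Q(n)|<p_n$ together with minimality of the odometer lets you push $L_\alpha$ into fibres of $\pi_2$-length at most $2^{-i}$, Lemma~\ref{lem:existence of idempotent} then yields an idempotent in $S$, and idempotents project to the identity on the odometer factor, forcing $f(L_\alpha)\in L_\alpha$. One minor simplification: closedness of $S$ already follows from writing $S=\bigcap_{p,q\in L_\alpha}\{g:\,g(p)=g(q)\}$ as an intersection of closed sets, so the monotonicity of the fibre maps, while true, is not needed there.
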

The next lemma provides a characterization of minimal idempotents in Floyd-Auslander systems.
\begin{lem}\label{lem:when is minimal}
    Let $(X,T)$ be a Floyd-Auslander system and consider $f\in E(X)$ with $\pi_1\circ f(\alpha,y)=\alpha$ for all $(\a,y)\in X$.
Then $f\in J^{\mathrm{min}}(X)$ if and only if
    $f$ collapses all fibres.
\end{lem}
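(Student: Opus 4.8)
The plan is to prove the two implications essentially by hand, relying only on Facts~\ref{fact1} and~\ref{fact2} together with the basic structure theory of compact right topological semigroups: every left ideal contains a minimal left ideal, every minimal left ideal contains an idempotent, and a $\le$-minimal idempotent generates a minimal left ideal (see \cite{HindmanStrauss2012}). The observation used throughout is the following. By Fact~\ref{fact1}, every idempotent of $E(X)$ lies over the identity of the odometer and restricts on each fibre $L_\alpha$ either to $\Id|_{L_\alpha}$ or to a collapse onto a single point of $L_\alpha$. Moreover, if $g\in E(X)$ collapses $L_\alpha$ to the point $(\alpha,z)\in L_\alpha$, then for every $(\alpha,y)\in L_\alpha$ the value $(hg)(\alpha,y)=h(\alpha,z)$ is independent of $y$, while $gh$ still collapses $L_\alpha$ whenever $h$ maps $L_\alpha$ into itself.

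For the implication ``$f$ collapses all fibres $\Rightarrow f\in J^{\mathrm{min}}(X)$'', write $f(\alpha,y)=(\alpha,z_\alpha)$ with $(\alpha,z_\alpha)\in L_\alpha$. Then $f^2(\alpha,y)=f(\alpha,z_\alpha)=(\alpha,z_\alpha)=f(\alpha,y)$, so $f\in J(X)$. To show $f$ is $\le$-minimal, take any $h\in J(X)$ with $h\le f$, i.e.\ $h=hf$. Evaluating at $(\alpha,y)$ gives $h(\alpha,y)=h\big(f(\alpha,y)\big)=h(\alpha,z_\alpha)$, which does not depend on $y$, so $h$ collapses $L_\alpha$; as $\alpha$ is arbitrary, $h$ collapses every fibre. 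Writing $h(\alpha,y)=(\alpha,w_\alpha)$ with $(\alpha,w_\alpha)\in L_\alpha$ (using Fact~\ref{fact1} once more for ``over the identity''), we obtain $(fh)(\alpha,y)=f(\alpha,w_\alpha)=(\alpha,z_\alpha)=f(\alpha,y)$, so $f=fh$, that is $f\le h$. Hence no idempotent is strictly $\le$-below $f$, and $f\in J^{\mathrm{min}}(X)$.

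For the converse I argue by contraposition. Suppose $f\in E(X)$ lies over the identity but does not collapse all fibres; by Fact~\ref{fact1} there is $\alpha$ with $f|_{L_\alpha}=\Id|_{L_\alpha}$, and $L_\alpha$ must be a non-degenerate segment, since the identity already collapses a singleton fibre. Assume, for a contradiction, that $f\in J^{\mathrm{min}}(X)$. Then $M:=E(X)f$ is a minimal left ideal, and $f\in M$ since $f=f^2$. By Fact~\ref{fact2}, choose $g\in J(X)$ with $g(L_\alpha)=(\alpha,z)$ for some $(\alpha,z)\in L_\alpha$. Since $M$ is a left ideal, $gf\in M$; then $E(X)(gf)$ is a non-empty left ideal contained in the minimal left ideal $M$, so $E(X)(gf)=M\ni f$, and we may pick $s\in E(X)$ with $s(gf)=f$. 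Evaluating both sides at $(\alpha,y)$ for $(\alpha,y)\in L_\alpha$: the left-hand side is $f(\alpha,y)=(\alpha,y)$, while the right-hand side is $s\big(g(f(\alpha,y))\big)=s\big(g(\alpha,y)\big)=s(\alpha,z)$, which is independent of $y$. Since $L_\alpha$ is non-degenerate this is impossible, so $f\notin J^{\mathrm{min}}(X)$, and the contrapositive is proved.

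I expect the only delicate point to be the semigroup-theoretic input in the second implication --- that a minimal idempotent, in the sense of the one-sided order $e\le f\Leftrightarrow e=ef$ adopted here, generates a minimal left ideal. This is part of the standard package for compact right topological semigroups: the various one- and two-sided notions of minimality for idempotents coincide with membership in the kernel, see \cite{HindmanStrauss2012}. Everything else is a direct manipulation of $E(X)$ as a set of self-maps of $X$, using only that idempotents lie over the identity; in particular, continuity of the elements of $E(X)$ is never needed.
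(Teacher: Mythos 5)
Your proof is correct and follows exactly the route the paper indicates: the paper's own proof of this lemma is a one-line appeal to Facts~\ref{fact1} and~\ref{fact2} (the ``if'' direction being the computation that a fibre-collapsing element over the identity is idempotent and that anything $\le$-below it is again fibre-collapsing, hence equivalent to it), and your argument is a correct fleshing-out of precisely that. The only external input you use --- that a $\le$-minimal idempotent generates a minimal left ideal --- is part of the standard structure theory of compact right topological semigroups from the Hindman--Strauss reference the paper already cites for its order on idempotents.
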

\begin{proof}
 The ``if"-direction is straightforward and the ``only if"-part follows from Fact~\ref{fact1} and Fact~\ref{fact2}.
\end{proof}

\begin{lem}\label{lem:existence of minimal}
Let $(X,T)$ be a Floyd--Auslander system and suppose $\mc A\ssq \Sigma_{(p_n)}$ is \good.
Fix $a\in(0,1)$.
There exist $f,g\in J^{\mathrm{min}}(X)$ such that for every $\a\in\mc A$
\[
(\{\a\}\times[0,a])\cap f(L_\alpha)=\emptyset \quad \text{and} \quad
 (\{\a\}\times[a,1])\cap g(L_\alpha)=\emptyset.
\]
\end{lem}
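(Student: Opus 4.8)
The plan is to obtain $f$ and $g$ by post-composing a single fixed minimal idempotent with an element of $E(X)$ that pushes all maximal fibres over $\mathcal A$ up, resp.\ down. \emph{Reduction.} Fix any $w\in J^{\mathrm{min}}(X)$. The first-coordinate factor $\pi_1\colon X\to\Sigma_{(p_n)}$ is the odometer, and the Ellis semigroup of an equicontinuous system is a group, so the only idempotent of $E(\Sigma_{(p_n)})$ is the identity; hence $\pi_1\circ w(\alpha,y)=\alpha$, and Lemma~\ref{lem:when is minimal} shows that $w$ collapses every fibre, say $w(L_\alpha)=\{(\alpha,z_\alpha)\}$. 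It therefore suffices to produce $u_+,u_-\in E(X)$ with $\pi_1\circ u_\pm(\alpha,y)=\alpha$ and
\[
u_+(L_\alpha)\ssq\{\alpha\}\times(a,1],\qquad u_-(L_\alpha)\ssq\{\alpha\}\times[0,a)\qquad(\alpha\in\mathcal A):
\]
then $f:=u_+\circ w$ and $g:=u_-\circ w$ lie in $E(X)$, fix the first coordinate, and collapse every fibre (since $w$ does and $u_\pm$ sends a singleton to a singleton), hence $f,g\in J^{\mathrm{min}}(X)$ by Lemma~\ref{lem:when is minimal}; and $f(L_\alpha)=\{u_+(\alpha,z_\alpha)\}\ssq\{\alpha\}\times(a,1]$ and $g(L_\alpha)=\{u_-(\alpha,z_\alpha)\}\ssq\{\alpha\}\times[0,a)$, giving exactly the asserted disjointness.

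\emph{The pushing elements.} Since $\mathcal A$ consists of maximal fibres and $Q(n)\neq\emptyset$ for all $n$, we have $L_\alpha=\{\alpha\}\times[0,1]$ and $\lambda^m_\alpha=\Id$ for all $\alpha\in\mathcal A$ and $m\in\N$. Fix $t_0\in\N$ with $2^{-t_0}<\min\{a,1-a\}$. We build $u_+$ as a pointwise limit along a convergent subnet of a sequence $(T^{n_k})_k$ with $n_k\underline1\to\underline0$ (so the limit fixes the first coordinate), where $n_k\in\N$ is chosen so that, \emph{simultaneously for every $\alpha\in\mathcal A$}, the odometer element $\alpha+n_k\underline1$ has its coordinates outside the sets $Q(\cdot)$ located at exactly $t_0$ positions, all lying in the corresponding sets $H_2(\cdot)$. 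For such $\gamma=\alpha+n_k\underline1$ the compositions $\lambda^1_{\gamma_1}\circ\dots\circ\lambda^m_{\gamma_m}$ stabilise, once $m$ passes all $t_0$ positions, to $\lambda_2^{t_0}\colon y\mapsto 2^{-t_0}y+1-2^{-t_0}$; hence $T^{n_k}(\alpha,y)=(\alpha+n_k\underline1,\,2^{-t_0}y+1-2^{-t_0})$ and in the limit $u_+(L_\alpha)=\{\alpha\}\times[1-2^{-t_0},1]\ssq\{\alpha\}\times(a,1]$. The element $u_-$ is obtained by the mirror construction, using the $k'_\ell$ of (P2) together with $H_0$ and $\lambda_0$ in place of $H_2$ and $\lambda_2$, which yields $u_-(L_\alpha)=\{\alpha\}\times[0,2^{-t_0}]\ssq\{\alpha\}\times[0,a)$; equivalently one applies the conjugacy $(\alpha,y)\mapsto(\alpha,1-y)$ — which swaps $\lambda_0$ and $\lambda_2$ and keeps $\mathcal A$ \good\ — to the $u_+$ already constructed for the reflected system.

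\emph{Choosing $n_k$ — where (P1) and (P2) enter.} Fix $N_k\to\infty$; the mixed-radix digit sequence of $n_k\underline1$ will vanish at all positions $\le N_k$ (forcing $n_k\underline1\to\underline0$) and be supported as follows. Using that $\{q_\ell\}$ and $\{k_\ell\}$ are strictly increasing, select positions $N_k<k_{\ell_1}<q_{m_1}<k_{\ell_2}<q_{m_2}<\dots<k_{\ell_{t_0}}<q_{m_{t_0}}$. At each $k_{\ell_i}$, by (P2) every $\alpha\in\mathcal A$ shares the coordinate $\zeta_{k_{\ell_i}}\in Q(k_{\ell_i})$ while $H_2(k_{\ell_i})\neq\emptyset$, so we may add a digit of $n_k\underline1$ at $k_{\ell_i}$ turning $\zeta_{k_{\ell_i}}$ into some $h_i\in H_2(k_{\ell_i})$; this possibly produces a carry, which we forward up to $q_{m_i}$ by setting the intervening digits of $n_k\underline1$ to $p_j-1$ (these leave the corresponding coordinates of $\alpha+n_k\underline1$ equal to the original, $Q$-valued, coordinates of $\alpha$ while passing the carry on) and then absorb at $q_{m_i}$ using the slack $\xi^1_{q_{m_i}}<\xi^2_{q_{m_i}}$ from (P1): setting the digit of $n_k\underline1$ at $q_{m_i}$ to $\xi^2_{q_{m_i}}-\xi^1_{q_{m_i}}-1\,(\geq0)$ turns the common coordinate $\xi^1_{q_{m_i}}$, after the incoming carry, into $\xi^2_{q_{m_i}}\in Q(q_{m_i})$ with no outgoing carry. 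All remaining digits of $n_k\underline1$ are $0$. Because every coordinate touched is constant over $\mathcal A$, this single $n_k$ works for all $\alpha\in\mathcal A$ at once, and $\alpha+n_k\underline1$ then has non-$Q$ coordinates exactly $\{k_{\ell_1},\dots,k_{\ell_{t_0}}\}$, all in $H_2$, as required.

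\emph{Main difficulty.} The heart of the argument is the last step: a single time $n_k$ must act correctly on \emph{all}, uncountably many, fibres over $\mathcal A$ at once, and the carries generated by the odometer addition must be controlled so that no stray $H_0$-coordinate — which would drag the fibre back down — is created before the $t_0$ designated $H_2$-coordinates are in place. This is precisely what being \good\ delivers: (P2) furnishes shared coordinates at which $H_2$ (resp.\ $H_0$) is nonempty, and (P1) furnishes shared coordinates with enough slack to soak up the carries harmlessly.
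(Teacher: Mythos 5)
Your proposal is correct and follows essentially the same strategy as the paper's proof: the core step in both is the same digit surgery on the odometer, using (P2) to turn a common $Q$-coordinate at some $k_\ell$ into an $H_2$- (resp.\ $H_0$-) coordinate uniformly over $\mathcal A$, and using the slack $\xi^1_{q_\ell}<\xi^2_{q_\ell}$ from (P1) to absorb the resulting carry while keeping every other coordinate in $Q$. The only organisational difference is that you post-compose a fixed minimal idempotent $w$ with the pushing element and certify minimality via Lemma~\ref{lem:when is minimal}, whereas the paper perturbs a net $T^{t_\xi}\to g\in J^{\mathrm{min}}(X)$ directly; your packaging is, if anything, slightly cleaner. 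One point to state explicitly: the $p_j-1$ padding between $k_{\ell_i}$ and $q_{m_i}$ (and the digit $\xi^2-\xi^1-1$ at $q_{m_i}$) must be used \emph{only} when adding $d_i$ at $k_{\ell_i}$ actually generates a carry, i.e.\ when the chosen $h_i\in H_2(k_{\ell_i})$ satisfies $h_i<\zeta_{k_{\ell_i}}$; if no carry is produced (which cannot always be forced), those digits must be set to $0$, since without an incoming carry the digit $p_j-1$ would shift the intervening coordinates out of $Q(j)$. With that one-line clarification the argument is complete.
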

\begin{proof} We only consider the first case (involving $f$), the second one works similarly (using the sequence $\{k_\ell'\}$ instead of $\{k_\ell\}$ in the below argument).

Fix $j\in\N$ such that $\l_2^j(0)>a$.
Pick $g$ from $J^{\mathrm{min}}(X)\ssq E(X)$ and a net $\{t_\xi\}_{\xi\in I}\ssq \N$ of the form
\[t_\xi=\overbrace{00\ldots00}^{\ell_\xi}\xi_1\ldots\xi_{n_\xi}\]
such that $\lim_{\xi\in I}T^{t_\xi}=g$ and $\lim_{\xi\in I}t_\xi=0$ (in $\odo$). Since $\mc{A}$ is \good,
possibly going over to a subnet, we may assume without loss of generality that for each $\xi\in I$, there exist $k_{\xi}^1<K_{\xi}^1<k_{\xi}^2<K_{\xi}^2<\ldots<k_{\xi}^j<K_{\xi}^j< \ell_\xi$ with $\lim_{\xi\in I}k_{\xi}^1=\infty$ and such that
 $k_\xi^i\in \{k_\ell\}_{\ell\in\N}$ and  $K_\xi^i\in \{q_\ell\}_{\ell\in\N}$ for $i=1,\ldots,j$ (with $\{q_\ell\}$ and $\{k_\ell\}$ as in (P1) and (P2), respectively). 
 Hence, there exist $\Delta_\xi^{i}\in [p_{k^i_\xi}]$ and $\Gamma_\xi^{i}\in [p_{K^i_\xi}]$ such that for all $\a\in\mc A$,
$\a_{k^i_\xi}+\Delta_\xi^i\in H_2(k^i_\xi) \pmod {p_{k_\xi^i}}$ and $\a_{K^i_\xi}+\Gamma_\xi^i\in Q(K^i_\xi)$  for $i=1,\ldots,j$.
Therefore,
for $\xi \in I$ and $i=1,\ldots,j$, we may choose  $u_1^i\in [p_{k_\xi^i+1}], \ldots, u_{K_\xi^i-k_\xi^i}^i\in [p_{K_\xi^i}]$
such that
\[\tau_{\xi}:=\underbrace{0\ldots 0\Delta_\xi^1}_{k_\xi^1}\underbrace{u_1^1\ldots u_{K_\xi^1-k_\xi^1}^1 0\ldots 0
\Delta_\xi^2}_{k_\xi^2-k_\xi^1}\underbrace{u_1^2\ldots u_{K_\xi^2-k_\xi^2}^2 0\ldots 0\Delta_\xi^j}_{k_\xi^2-k_\xi^j}u_1^j\ldots u_{K_\xi^j-k_\xi^j}^j0\ldots 0\]
satisfies
\[
(\a+\tau_\xi)_n \in
\begin{cases}
H_2(n) & \text{ for } n=k_\xi^1,\ldots, k_\xi^j,\\
Q(n) & \text{ otherwise,}
\end{cases}
\]
for each $\alpha \in \mc A$.
We define a new net $\{t_\xi'\}_{\xi\in I}$ by $t_\xi'=\tau_\xi+t_\xi$.
Note that any accumulation point of the net \( \{T^{t_\xi'}\}_{\xi \in I}\ssq E(X) \) has the desired properties.
\end{proof}

\begin{prop}\label{prop: star implies main}
      Let $(X,T)$ be a Floyd-Auslander system with property  $(\ast)$.
Then $|J^{\mathrm{min}}(X)|>2^{\aleph_0}$.
\end{prop}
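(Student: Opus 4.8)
The plan is to use property $(\ast)$ to produce an injection from the power set of an uncountable set into $J^{\mathrm{min}}(X)$. Assume we are in case (1) of $(\ast)$ (case (2) is symmetric, replacing $[0,a]$ by $[a,1]$ and the roles of $f$ and $g$ from Lemma~\ref{lem:existence of minimal}); fix the associated $a\in(0,1)$ and \good\ subset $\mathcal A\ssq \Sigma_{(p_n)}$. For each $B\ssq \mathcal A$ property $(\ast)$ hands us an idempotent $f_B\in J(X)$ together with a partition $B=B_1\sqcup B_2$ of the prescribed form. The key observation is that $f_B$ itself need not be minimal — it may act as the identity on the fibres $L_\alpha$ with $\alpha\in B_2$ — so we must first \emph{minimalise} it. To do this, invoke Lemma~\ref{lem:existence of minimal} to obtain a minimal idempotent $g\in J^{\mathrm{min}}(X)$ with $(\{\alpha\}\times[0,a])\cap g(L_\alpha)=\emptyset$ for every $\alpha\in\mathcal A$; equivalently, $g$ pushes every maximal fibre over $\mathcal A$ strictly above level $a$. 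Now set $e_B:=g\cdot f_B\cdot g$ (or a suitable one-sided product; the precise combination is to be pinned down so that the result lands in $J^{\mathrm{min}}(X)$). By Lemma~\ref{lem:when is minimal}, since $g$ collapses all fibres and $g$ fixes the first coordinate, any product of $g$ with an element that fixes the first coordinate again collapses all fibres and fixes the first coordinate, hence lies in $J^{\mathrm{min}}(X)$ — provided we check it is an idempotent, which follows from $g$ being a minimal idempotent and standard facts about the kernel of a right topological semigroup (if $g$ is minimal then $g f_B g$ is an idempotent in the minimal left ideal $E(X)g$).

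The heart of the argument is then to show that the assignment $B\mapsto e_B$ is injective, or at least that its image is uncountable. Here is where the partition data is used. For $\alpha\in \mathcal A\setminus B$ we have $f_B(L_\alpha)=(\alpha,b(\alpha))$ with $b(\alpha)\in[0,a]$, while for $\alpha\in B_1\cup B_2$ either $f_B(L_\alpha)=(\alpha,c(\alpha))$ with $c(\alpha)>a$ or $f_B$ is the identity on $L_\alpha$ (so in particular $f_B(L_\alpha)\cap(\{\alpha\}\times(a,1])\neq\emptyset$, since $L_\alpha$ is a maximal fibre and $g$-type arguments show it reaches above $a$). Composing with $g$ on the appropriate side then records, for each $\alpha\in\mathcal A$, whether $e_B(L_\alpha)$ sits at level $\le a$ or $>a$: if $\alpha\notin B$ the point $f_B(L_\alpha)$ lies in $\{\alpha\}\times[0,a]$ and $g$ maps it to a definite location, whereas if $\alpha\in B$ the fibre's image escapes above $a$. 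Thus the single real-valued invariant $\alpha\mapsto \pi_2(e_B(L_\alpha))$ detects membership in $B$ well enough to separate different $B$'s — at a minimum, it separates $B$ from $\mathcal A\setminus B$-type complements, which already yields $2^{|\mathcal A|}\geq 2^{2^{\aleph_0}}>2^{\aleph_0}$ distinct minimal idempotents because $\mathcal A$ is uncountable. I would organise this as: (i) the $e_B$ are idempotents, (ii) the $e_B$ are minimal, (iii) distinct $B$ give distinct $e_B$.

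The main obstacle I anticipate is step (iii), and more precisely making the composition with $g$ do exactly what is wanted without destroying the level information. The delicate point is that $f_B$ may equal the identity on $L_\alpha$ for $\alpha\in B_2$, and on such a fibre $f_B$ does \emph{not} collapse — so the product with $g$ must be arranged (left versus right multiplication, and possibly $g f_B g$) so that (a) the outcome still collapses $L_\alpha$, hence is minimal, and (b) the collapsed image still lies above level $a$, distinguishing it from the $\alpha\notin B$ case where the image lies in $[0,a]$. One has to check that $g$ does not inadvertently pull the image of a fixed fibre back down below $a$: this is exactly why Lemma~\ref{lem:existence of minimal} was phrased as $(\{\alpha\}\times[0,a])\cap g(L_\alpha)=\emptyset$, i.e.\ $g$ sends all of $L_\alpha$ strictly above $a$, so $g\circ(\mathrm{anything})$ restricted to $L_\alpha$ lands above $a$. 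Once that compatibility is verified, the counting is immediate. A secondary, more routine obstacle is confirming $e_B\in J^{\mathrm{min}}(X)$ rather than merely $J(X)$: this is handled by Lemma~\ref{lem:when is minimal} together with the observation that $g\in J^{\mathrm{min}}(X)$ forces $E(X)g$ to be a minimal left ideal, so any idempotent in $E(X)g$ is minimal.
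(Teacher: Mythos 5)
Your overall strategy coincides with the paper's: take the auxiliary minimal idempotent supplied by Lemma~\ref{lem:existence of minimal} (the one with $(\{\alpha\}\times[0,a])\cap g(L_\alpha)=\emptyset$ for all $\alpha\in\mathcal A$), combine it with the $f_B$ from property $(\ast)$, use Lemma~\ref{lem:when is minimal} to certify minimality, and separate different $B$'s by which side of level $a$ the collapsed fibre lands on. However, the one detail you explicitly leave open --- on which side to compose with $g$ --- is precisely where the argument lives or dies, and both of your tentative answers fail. Since $g$ collapses every fibre and all the maps involved preserve fibres, $g\circ h=g$ for any fibre-preserving $h\in X^X$; hence $g\circ f_B\circ g=g$ and likewise $g\circ f_B=g$ for \emph{every} $B$, so your family $e_B$ degenerates to the single idempotent $g$. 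Your guiding heuristic ``$g\circ(\text{anything})$ restricted to $L_\alpha$ lands above $a$'' is true, but it is exactly the statement that post-composition with $g$ erases all dependence on $B$ --- including on the fibres with $\alpha\notin B$, whose image you need to remain in $[0,a]$.

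The correct (and only workable) combination is $f_B\circ g$, i.e.\ apply $g$ \emph{first}; this is what the paper does. Then: (i) the composite collapses every fibre because $g$ already does, so Lemma~\ref{lem:when is minimal} gives minimality directly (a fibre-preserving, fibre-collapsing element of $E(X)$ is automatically idempotent, so your separate concern about idempotency and minimal left ideals is unnecessary); (ii) for $\alpha\in\mathcal A\setminus B$ the image is $f_B(g(L_\alpha))=f_B(L_\alpha)\in\{\alpha\}\times[0,a]$; (iii) for $\alpha\in B_1$ it is $(\alpha,c)$ with $c>a$; and (iv) for $\alpha\in B_2$, where $f_B$ is the identity on $L_\alpha$, it is $g(L_\alpha)$ itself, which lies above $a$ by the defining property of $g$ --- the only place that property is actually used. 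Evaluating at any $\alpha\in B'\setminus B$ then yields $f_B\circ g\neq f_{B'}\circ g$, and since $\mathcal A$ is uncountable, $2^{|\mathcal A|}\geq 2^{\aleph_1}>2^{\aleph_0}$ (note that $|\mathcal A|$ is only guaranteed to be uncountable, so your ``$2^{|\mathcal A|}\geq 2^{2^{\aleph_0}}$'' is unjustified, though harmless for the conclusion).
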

\begin{proof}
We only consider case (1) in property ($\ast$) as case (2) is similar (with the idempotent $g$ from Lemma~\ref{lem:existence of minimal} instead of $f$ in the following argument).
Consider $a\in (0,1)$ and $\mathcal A\ssq \odo$ as in ($\ast$).
    By Lemma~\ref{lem:existence of minimal}, there is a minimal  idempotent $f$ such that  for every \( \alpha \in \mathcal{A} \), we have \( (\{\a\}\times [0,a]) \cap f(L_\alpha)=\emptyset \).
Due to Lemma~\ref{lem:when is minimal}, given $B\subset\mathcal{A}$ and $f_B$ as in ($\ast$), $f_B\circ f$ is a minimal idempotent.
Therefore, the statement follows by proving $f_{B}\circ f\neq f_{B'}\circ f$ for any pair $B\neq B'$ of subsets of $\mathcal{A}$.
   To that end, choose \( \alpha \in B' \setminus B \) (assuming without loss of generality that $B'\setminus B\neq \emptyset$).
Then
\[
f_{B} \circ f(L_\alpha) = f_{B}(L_\alpha)\in \{\a\}\times [0,a] \qquad
\text{ and }
\qquad
f_{B'} \circ f(L_\alpha)\in\{(\a,c)\colon c>a\}.
\]
In particular,
$f_{B'} \circ f(L_\alpha) \notin \{\a\}\times [0,a]$ so that $f_{B} \circ f \neq f_{B'} \circ f$.
This completes the proof.
\end{proof}

\section{Non-tame implies many minimal idempotents}\label{sec: many idempotents}
In this final section, we prove Theorem~\ref{thm: main 1 intro}. 
As noted in the introduction, the implication $\eqref{item: 1} \Rightarrow \eqref{item: 6}$ was established in 
\cite[Theorem~2.5]{GlanserMegrelishvili}; although the argument there concerns the classical Floyd system (or, more precisely, Auslander's variant of it; see Remark~\ref{rem: classical Floyd system}), it carries over verbatim to all Floyd-Auslander systems with finite $\Lambda:=\{n\in\N:|Q(n)|\ge 2\}$.
Indeed, \cite[Theorem~2.5]{GlanserMegrelishvili} shows that finiteness of $Q(n)$ already implies that $(X,T)$ is tame$_1$---that is, not only does $|E(X,T)|\leq 2^{\aleph_0}$ hold, but the space $E(X,T)$ is also first-countable.

Hence, this section is about establishing the implication 
$\eqref{item: 6} \Rightarrow \eqref{item: 4}$, which is the technically most involved part of Theorem~\ref{thm: main 1 intro}.
Using Proposition~\ref{prop: star implies main}, we do this by showing that Floyd-Auslander systems with uncountably many interval fibres satisfy property $(\ast)$.
The proof is divided into the following three cases, which are separately dealt with in Lemmas \ref{thm:case1}, \ref{thm:case2}, and \ref{thm:case3}, respectively.
It is not hard to see but important to note that any Floyd-Auslander system with uncountably many non-invertible fibres (i.e.\ any system where $\Lambda$ is infinite) is covered by (at least) one of the following cases.
\begin{enumerate}[{(A)}]
    \item  There is a strictly increasing sequence $\{n_\ell\}_{\ell\in\N}\subset \L$ such that for each $\ell\in\N$, there exist $a_0(n_\ell)\neq a_1(n_\ell)\in Q(n_\ell)$ and $\Delta(n_\ell)\in [p_{n_\ell}]$ such that $ a_0(n_\ell)+\Delta(n_\ell)\pmod {p_{n_\ell}}\in H_{0}(n_\ell)$ and
$a_1(n_\ell)+\Delta(n_\ell)\pmod {p_{n_\ell}}\in H_{2}(n_\ell)$.\label{item: A}
\medskip
\item There is a strictly increasing sequence $\{n_\ell\}_{\ell\in\N}\subset \L$ such that for $\ell\in\N$ and $a_0(n_\ell)\neq a_1(n_\ell)\in Q(n_\ell)$,
    $a_0(n_\ell)+j\pmod {p_{n_\ell}}\in H_i(X)$ if and only if $a_1(n_\ell)+j\pmod {p_{n_\ell}}\in H_i(X)\text{ for each }j\in [p_{n_\ell}]\text{ and }i\in\{0,2\}.$\label{item: B}
\medskip
    \item There is a strictly increasing sequence $\{n_\ell\}_{\ell\in\N}\subset \L$ such that for each $\ell\in\N$, there exist $a_0(n_\ell)\neq a_1(n_\ell)\in Q(n_\ell)$ and $\Delta(n_\ell)\in [p_{n_\ell}]$ such that $Q(n_\ell)\ni a_0(n_\ell)+\Delta(n_\ell)< p_{n_\ell}$ and
$a_1(n_\ell)+\Delta(n_\ell)\pmod {p_{n_\ell}}\in H_{0}(n_\ell)\cup H_{2}(n_\ell)$.\label{item: C}
\end{enumerate}

In all of the following, $\mathcal T$ denotes an uncountable \choice \ of the binary full shift $(\{0,1\}^\N,\sigma)$, see Lemma~\ref{lem: uncountable choice domains}.
We will be dealing with nets indexed by elements of
$(0,1)\times [\mathcal T]^{<\omega}$, where $[\mathcal T]^{<\omega}$ denotes the finite subsets of $\mathcal T$.
When doing so, we consider $(0,1)\times [\mathcal T]^{<\omega}$ equipped with a direction $\preceq$ such that
$(\eps;x^1,\ldots,x^n)\preceq (\eps';y^1,\ldots,y^{k})$
whenever $\eps\geq \eps'$ and $\{x^1,\ldots,x^n\}\ssq \{y^1,\ldots,y^{k}\}$.

\begin{lem}\label{thm:case1}
If $(X,T)$ is a minimal Floyd-Auslander system satisfying \eqref{item: A}, then $(X,T)$ satisfies $(\ast)$. 
\end{lem}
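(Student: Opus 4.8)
The plan is to show that a minimal Floyd--Auslander system satisfying \eqref{item: A} verifies case~(1) of property $(\ast)$. Throughout I fix the uncountable \choice\ $\mathcal T$ of $(\{0,1\}^\N,\sigma)$, the sequence $\{n_\ell\}_{\ell\in\N}\subset\L$, and the data $a_0(n_\ell)\neq a_1(n_\ell)\in Q(n_\ell)$, $\D(n_\ell)\in[p_{n_\ell}]$ supplied by \eqref{item: A}; observe that \eqref{item: A} already forces $H_0(n_\ell)\neq\emptyset$ and $H_2(n_\ell)\neq\emptyset$ for every $\ell$. First I would build the \good\ set $\mathcal A$ as an encoding of $\mathcal T$: split $\{n_\ell\}$ into strictly increasing subsequences $\{m_s\}$ (the \emph{choice slots}), $\{q_s\}$, $\{k_s\}$, $\{k'_s\}$, arranged so that some $q_s$ lies between any two consecutive choice slots; for $x=(x_s)_s\in\mathcal T$ put $\a(x)_{m_s}=a_{x_s}(m_s)$ and $\a(x)_n=\min Q(n)$ at all other coordinates; and set $\mathcal A=\{\a(x):x\in\mathcal T\}$. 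Since $\a(x)_n\in Q(n)$ for every $n$, each $L_{\a(x)}$ equals $[0,1]$ and is therefore maximal; since $a_0(m_s)\neq a_1(m_s)$, the map $x\mapsto\a(x)$ is injective, so $\mathcal A$ is uncountable. Property (P1) holds with $\xi^1_{q_s}=\min Q(q_s)$ and $\xi^2_{q_s}$ a larger element of $Q(q_s)$ (which exists as $q_s\in\L$), and (P2) holds with $\zeta_{k_s}=\min Q(k_s)$, $\zeta_{k'_s}=\min Q(k'_s)$. Thus $\mathcal A$ is \good, so that Lemma~\ref{lem:existence of minimal} applies to it and provides the minimal idempotent $f$ that is used (in Proposition~\ref{prop: star implies main}) together with the $f_B$'s.

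Fix now a constant $a\in(0,\tfrac12)$, whose exact value comes out of the estimate below. Given $B\subseteq\mathcal A$ I would produce the required $f_B\in J(X)$ as an accumulation point of the net $(T^{t_\nu})_\nu$ indexed by $\nu=(\eps;x^1,\dots,x^n)\in(0,1)\times[\mathcal T]^{<\omega}$ with the direction $\preceq$ introduced above. For a given $\nu$ set $N=\lceil 1/\eps\rceil$ and apply Proposition~\ref{prop: alternative characterisation of choice domains}(c) to the distinct points $x^1,\dots,x^n\in\mathcal T$ and to the (constant in $k$) choice functions $\phi^i_k:=\mathbf 1_B(\a(x^i))$, obtaining a strictly increasing sequence $(\tau_k)_{k\in\N}$ with $x^i_{\tau_k}=\mathbf 1_B(\a(x^i))$; we may assume $m_{\tau_1}>N$. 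Then define $t_\nu\in\odo$ to vanish on $[1,N]$, to equal $\D(m_{\tau_k})$ at each coordinate $m_{\tau_k}$ (corrected for a pending carry if there is one), and at every remaining coordinate $>N$ to be chosen so that on the ``unperturbed'' branch $\a(x^i)+t_\nu$ keeps its coordinate in $Q$, and so that any carry generated at a choice slot is discharged --- for this one exploits the pair $\xi^1_{q_s}<\xi^2_{q_s}$ of (P1) as a buffer at the coordinates $q_s$. The outcome is that, for every $i$: the coordinates of $\a(x^i)+t_\nu$ lie in $Q$ for all $n<m_{\tau_1}$; the coordinate at $m_{\tau_k}$ lies in $H_0(m_{\tau_k})$ when $\a(x^i)\notin B$ and in $H_2(m_{\tau_k})$ when $\a(x^i)\in B$ (by \eqref{item: A}); and between consecutive choice slots the coordinates lie in $Q$ apart from a bounded number of stray contracting coordinates.

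Since $t_\nu$ vanishes on $[1,N(\eps)]$ and $N(\eps)\to\infty$ along $\preceq$, we have $t_\nu\to\underline 0$ in $\odo$, so any subnet limit $f_B$ of $(T^{t_\nu})_\nu$ (which exists by compactness of $E(X)$) fixes the first coordinate. Writing $T^{t_\nu}(\a,y)=\bigl(\a+t_\nu,\ \lim_N\l^N_{\a+t_\nu}(y)\bigr)$ and invoking the previous paragraph: for $\a=\a(x^i)\in\mathcal A\setminus B$ the outermost non-identity factor of $\lim_N\l^N_{\a+t_\nu}$ is $\l_0$, there are infinitely many $\l_0$ factors (one at each choice slot $m_{\tau_k}$) and only boundedly many stray $\l_2$ factors per block, so a direct estimate on nested compositions of $\l_0,\l_1,\l_2$ gives $\lim_N\l^N_{\a+t_\nu}(y)\leq a$, uniformly in $i,\nu,y$; for $\a\in B$ the outermost factor is $\l_2$, whence $\lim_N\l^N_{\a+t_\nu}(y)\geq\tfrac12>a$. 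Passing to the limit, $f_B(L_\a)=(\a,b(\a))$ with $b(\a)\in[0,a]$ for $\a\in\mathcal A\setminus B$, and $f_B(L_\a)=(\a,c(\a))$ with $c(\a)\geq\tfrac12>a$ for $\a\in B$. Since $f_B$ collapses each $\mathcal A$-fibre to a point that it fixes, $f_B$ is an idempotent (and if one insists on idempotency on every fibre, one replaces $f_B$ by an idempotent of the compact right topological semigroup $E(X)$ acting identically on the $\mathcal A$-fibres). Taking $B_1=B$ and $B_2=\emptyset$, this is precisely case~(1) of $(\ast)$, so $(X,T)$ satisfies $(\ast)$.

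I expect the main obstacle to be the construction of $t_\nu$: one must perturb all of $\a(x^1),\dots,\a(x^n)$ at once in a mutually consistent way, but adding $\D(m_{\tau_k})$ at a choice slot can produce a carry on the $B$-branch and not on its complement (or the other way round), and these carries propagate to higher coordinates where they threaten to insert contracting factors that would blur the $[0,a]$-versus-$(a,1]$ dichotomy. Taming them is precisely where one uses that $\L$ is infinite: the buffer $\xi^1_{q_s}<\xi^2_{q_s}$ from (P1) lets a pending carry be absorbed at infinitely many coordinates while the unperturbed branch stays inside $Q$; this has to be combined with the quantitative estimate on nested compositions of $\l_0,\l_1,\l_2$ that keeps the limiting heights strictly below $\tfrac12$ on $\mathcal A\setminus B$. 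This carry bookkeeping, routine in spirit but lengthy, is where essentially all the content of Lemma~\ref{thm:case1} sits.
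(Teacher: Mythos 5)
Your overall architecture matches the paper's: encode the uncountable choice domain $\mathcal T$ into a \good\ family $\mathcal A=\{\alpha^x\}$ by placing $a_{x_\ell}(n_\ell)$ at designated slots, use $m$-realising times to build a net $T^{t_\nu}\to f_B$ with $t_\nu\to 0$ in $\odo$, insert $\Delta(n_{\tau_k})$ at the slots to split the $B$- and non-$B$-branches into $H_2$ versus $H_0$, and extract an idempotent from the generated semigroup. However, there is a genuine gap in the separation estimate, and it sits exactly where you locate "essentially all the content": the carry bookkeeping. Your choice functions are constant ($\phi^i_k\equiv\mathbf 1_B$), so on the non-$B$ branch the composition has the form $\l_0\circ(\text{carry factors})\circ\l_0\circ\cdots$, and your claim that there are "only boundedly many stray $\l_2$ factors per block" is unjustified: a carry out of a choice slot propagates through every subsequent position whose digit equals $p_i-1$, each wrapped digit possibly lying in $H_2(i)$, and nothing in condition \eqref{item: A} or minimality bounds the length of such runs uniformly in the block index. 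This is fatal because $\l_0\circ\l_2^{\,j}(w)=2^{-(j+1)}w+\tfrac12-2^{-(j+1)}\to\tfrac12$, so the non-$B$ values can approach $\tfrac12$ from below, while $\l_2\circ\l_0^{\,j}(w)\to\tfrac12$ from above on the $B$-branch; an accumulation point $f_B$ may then satisfy $b(\alpha)=c(\alpha')=\tfrac12$, and no constant $a$ realises the strict dichotomy $b(\alpha)\in[0,a]$, $c(\alpha')>a$ required by $(\ast)$.

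The paper closes this gap with two devices you are missing. First, the choice functions differ only in their \emph{first} coordinate, $(1,0,\ldots,0)$ versus $(0,0,\ldots,0)$, so every slot after the first contributes $\l_0$ on \emph{both} branches. Second, using the standing assumption $|Q(n)|<p_n$, the digit $a(n_\ell+1)$ is chosen so that $a(n_\ell+1)+1\pmod{p_{n_\ell+1}}\in H_0(n_\ell+1)$; hence on the non-$B$ branch the position immediately after the first slot yields $\l_0$ whether or not a carry occurs, and the auxiliary positions $m_\ell$ with $p_{m_\ell}-1\in H_0(m_\ell)\cup H_2(m_\ell)$ kill the carry before the next slot. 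Consequently the two outermost non-identity factors on the non-$B$ branch are $\l_0\circ\l_0$, pinning the image in $[0,1/4]$ regardless of all deeper (uncontrolled) carry factors, while the $B$-branch's outermost factor is $\l_2$, pinning its image in $[1/2,1]$; the gap $[0,1/4]$ versus $[1/2,1]$ survives the limit and gives $a=1/4$. Without some such control of the factors immediately following the outermost one, your "direct estimate on nested compositions" cannot produce a uniform $a<\tfrac12$. (Your remark that collapsing the $\mathcal A$-fibres already makes $f_B$ idempotent is also not correct as stated, but you supply the standard fix---passing to an idempotent of the generated compact right topological semigroup---which is what the paper does.)
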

\begin{proof}
    Consider $\{n_\ell\}$ as in \eqref{item: A}, and let $\{m_\ell\}_{\ell\in\N}\ssq\N$ be a strictly increasing sequence such that
$p_{m_\ell}-1\in H_0(m_\ell)\cup H_2(m_\ell)$ for all $\ell\in\N$ (see Definition~\ref{defn: Floyd-Auslander system}).
Since $(X,T)$ is minimal, there exist strictly increasing sequences $\{k_\ell\}_{\ell\in\N}$ and $\{k'_\ell\}_{\ell\in\N}$ with $H_2(k_\ell)\neq\emptyset$ and $H_0(k'_\ell)\neq\emptyset$ for all $\ell\in\N$.
Our assumptions further guarantee that there is a strictly increasing sequence $\{q_\ell\}_{\ell\in\N}$ with $|Q(q_\ell)|\ge 2$.
In the following, we may assume without loss of generality (possibly after passing to subsequences) that
for each $\ell\in\N$
\[
n_\ell< m_\ell<k_\ell<k'_\ell <q_\ell<n_{\ell+1},
\]
and that for each $\ell$ there is $a(n_\ell+1)\in Q(n_\ell+1)$ with\footnote{
Here, we make use of the standing assumption $|Q(n)|<p_n$ introduced at the end of Section~\ref{sec: Floyd-Auslander systems}.
Of course, it may well be that we can only arrange that
$a(n_\ell+1)+1 \pmod{p_{n_\ell+1}}\in H_2(n_\ell+1)$ holds
for infinitely many $\ell$.
In that case, the present proof remains almost entirely the same, except that we would have to verify (2) of property~$(\ast)$ instead of (1).\label{footnote 1}}
\begin{equation}\label{eq:small assumption}
a(n_\ell+1)+1 \pmod{p_{n_\ell+1}}\in H_0(n_\ell+1).
\end{equation}
Given $\ell\in\N$, fix some some $\xi_{q_\ell}^1,\xi_{q_\ell}^2\in Q(q_\ell)$ with
$0\le \xi_{q_\ell}^1<\xi_{q_\ell}^2\le p_{q_\ell}-1$.
Fix $a(i)\in Q(i)$ for all $i\in \N\setminus\big(\{n_\ell\}_{\ell\in\N}\cup \ \{n_\ell+1\}_{\ell\in\N}\cup \{q_\ell\}_{\ell\in\N}\big)$
and, for each $\ell$, let $a(n_\ell+1)\in Q(n_\ell+1)$ be as in \eqref{eq:small assumption}.
Now, for $x\in\mc T$ (with $\mc T$ as above), let $\alpha^x\in\Sigma_{(p_n)}$ be defined by
$\alpha^x_i=a(i)$ for all $i\in \N\setminus\big(\{n_\ell\}_{\ell\in\N}\cup \{q_\ell\}_{\ell\in\N}\big)$,
$\alpha^x_{q_\ell}=\xi_{q_\ell}^1$, and $\alpha^x_{n_\ell}=a_{x_\ell}(n_\ell)$ for all $\ell\in\N$.

We will show that $(X,T)$ satisfies (1) in property~ $(\ast)$ with $a=1/4$ and with $\mc A$ taken to be the (necessarily uncountable) family $\{\alpha^x:\,x\in\mc T\}$.
Note that by construction, $\mc A$ is \good.

Consider $B\ssq \mathcal A$.
Given $\eps>0$, and pairwise distinct $\alpha^{x^1},\ldots,\alpha^{x^k}\in \mathcal A\setminus B$ and $\alpha^{x^{k+1}},\ldots,\alpha^{x^n}\in B$,
 choose $m=m(\eps)\in \N$ such that $2^{-m}<\eps$; for $j=1,2,\ldots,k$, let $\phi^j=(0,0,\ldots,0)\in\{0,1\}^m$ and for  $j=k+1,k+2,\ldots,n$, let $\phi^j=(1,0,\ldots,0)\in\{0,1\}^m$.
  Let $m<\tau_1<\tau_2<\ldots<\tau_m$ be $m$-realising for $x^1,\ldots, x^n$ and $\phi^1,\ldots,\phi^n$.
With $\Delta(n_\ell)$ ($\ell\in \N$) as in \eqref{item: A}, we define 
 \[     t_{\eps;x^1,\ldots,x^n}=\underbrace{\overbrace{00\ldots00\Delta(n_{\tau_{1}})}^{n_{\tau_{1}}}\overbrace{00\ldots00\Delta(n_{\tau_2})}^{n_{\tau_2}-n_{\tau_{1}}}00\ldots00\Delta(n_{\tau_m})}_{n_{\tau_m}}000\ldots
 \]
By definition of $t_{\varepsilon; x^1, \ldots, x^n}$ and the assumptions on $\{m_\ell\}$, we have, for $j=1,\ldots,n$, that
\begin{equation}\label{eq:20255211345}
\begin{split}
    &(\a^{x^j}+t_{\eps;x^1,\ldots,x^n})_i\in Q(i) \text{ for } i\in\N\setminus\{n_{\tau_s},n_{\tau_s}+1,\ldots,m_{\tau_s}\}_{s=1}^m;\\
    & (\a^{x^j}+t_{\eps;x^1,\ldots,x^n})_i\in H_{2\phi^j_s}\text{ for }i\in \{n_{\tau_s}\} _{s=1}^m;\\
    &(\a^{x^j}+t_{\eps;x^1,\ldots,x^n})_i\in
    \begin{cases}
        Q(i) & \text{if }c_{i-1}=0,\\
        H_{0}(i) & \text{otherwise,}
    \end{cases}\text{ for } i\in\{n_{\tau_s}+1\}_{s=1}^m;\\
    &(\a^{x^j}+t_{\eps;x^1,\ldots,x^n})_i\in
    \begin{cases}
        Q(i) & \text{if }c_{i-1}=0,\\
        Q(i)\cup H_{0}(i)\cup H_2(i) & \text{otherwise,}
    \end{cases}
    \text{ for } i\in\{n_{\tau_s}+2,\ldots,m_{\tau_s}\}_{s=1}^m.
\end{split}
\end{equation}
Here, $c_{i-1}$ denotes the carry from position $i-1$ to position $i$.
Note that the second alternative in the third line follows from \eqref{eq:small assumption}.
Note further that if there is no carry from position
$n_{\tau_s}$, then there is no carry from any of the positions $n_{\tau_s}+1,\ldots,m_{\tau_s}-1$ ($s=1,\ldots,m$).

The above defines a net $\{t_{\eps;x^1,\ldots,x^n}\}$ (indexed through $(\eps,\{x^1,\ldots,x^n\})\in (0,1)\times [\mathcal T]^{<\omega}$).
Observe that $\{t_{\varepsilon; x^1, \ldots, x^n}\}$ converges to $0$ in $\Sigma_{(p_n)}$.
Consequently,  for any accumulation point $f$ of the net $\{T^{t_{\eps;x^1,\ldots,x^{n}}}\}\ssq E(X)$,
we have $f(L_\alpha) \subset L_\alpha$  for every $\alpha \in \Sigma_{(p_n)}$.
Moreover, note that $m \to \infty$ as $\varepsilon \to 0$.
Hence, the second line in \eqref{eq:20255211345} gives for any $(\alpha, y), (\alpha, y') \in X$ ($\alpha \in \mathcal{A}$),
\begin{equation}\label{eq:20255211438}
    d(f(\alpha, y), f(\alpha, y')) = 0.
\end{equation}
Now consider the compact right topological semigroup $\langle f \rangle\ssq E(X, T)$ generated by $f$.
By Lemma~\ref{lem:existence of idempotent}, there exists an idempotent $\tilde{f} \in \langle f \rangle$.
By \eqref{eq:20255211438}, for all $\alpha \in \mathcal{A}$, we have
$\tilde{f}(L_\alpha) = f(L_\alpha)$.
In the following, we may hence assume without loss of generality that $f$ itself is an idempotent.

Now, given such $f$ and $\alpha^x\in \mathcal A$
as well as $\eps>0$,
choose $\tau=t_{\eps';x,x^1,\ldots,x^{n}}$ with $\eps'<\eps$ such that \begin{equation}\label{eq:1455}
    d(f(\alpha^x,1),T^\tau{(\alpha^x,1)})<\eps.
\end{equation}
By  \eqref{eq:20255211345}, we have
\begin{equation*}
    \pi_2\circ T^\tau (\a^x,1)=\l_0\circ\l_0\circ\l_1^x\circ\ldots\circ\l^x_{N_x}(1)\qquad (\a^x\in\mathcal{A}\setminus B),
\end{equation*}
and
\begin{equation*}
     \pi_2\circ T^\tau (\a^x,1)=\l_2\circ\l_0\circ\l_1^x\circ\ldots\circ\l^x_{N_x}(1)\qquad (\a^x\in  B),
\end{equation*}
where $\l^x_j\in \{\l_0,\l_2\}$ and $j=1,2,\ldots,N_x$ for some $N_x\ge m$.
Thus, if $\a^x\in\mathcal{A}\setminus B$, then for any $z\in[0,1]$,
\[|\pi_2\circ f(\alpha^x,z)-[0,1/4]|\leq d(f(\alpha^x,z),f(\a^x,1))+d(f(\alpha^x,1),T^{\tau}(\a^x,1))+|\pi_2\circ T^{\tau}(\a^x,1)-[0,1/4]|\leq \eps,\]
where $|t-[p,q]|=\min\{|t-s|:s\in[p,q]\}$.
As $\eps>0$ was arbitrary,
we conclude $\pi_2\circ f(\alpha^x,z)\in [0,1/4]$.
Similarly, if $\a^x\in  B$, then for any $z\in[0,1]$,
$|\pi_2\circ f(\alpha^x,z)-[1/2,1]|\leq \eps$.
This shows ($\ast$) (with $B_2=\emptyset$ for each $B\ssq \mathcal A$).
\end{proof}

\begin{lem}\label{thm:case2}
If $(X,T)$ is a minimal Floyd-Auslander system satisfying \eqref{item: B}, then $(X,T)$ satisfies $(\ast)$.
\end{lem}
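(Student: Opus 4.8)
The plan is to follow the architecture of the proof of Lemma~\ref{thm:case1}. From the uncountable choice domain $\mathcal T\subset\{0,1\}^\N$ of Lemma~\ref{lem: uncountable choice domains} I would build an uncountable good family $\mathcal A=\{\a^x:x\in\mathcal T\}\subset\odo$, and for each $B\subset\mathcal A$ produce the required $f_B\in J(X)$ as an idempotent inside the semigroup generated by an accumulation point of a net $\{T^{t_\nu}\}$. What must change is the separation mechanism: under \eqref{item: B} the two admissible digits $a_0(n_\ell),a_1(n_\ell)$ at coordinate $n_\ell$ generate \emph{identical} patterns of $\l_0$- and $\l_2$-hits, so one cannot distinguish fibres over $\{\a_{n_\ell}=a_0(n_\ell)\}$ from those over $\{\a_{n_\ell}=a_1(n_\ell)\}$ by ``which map gets applied'', as in case~\eqref{item: A}. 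The crucial observation is that \eqref{item: B} forces the colouring $k\mapsto\l^{n_\ell}_k$ on $\Z/p_{n_\ell}\Z$ to be invariant under translation by $a-b$ for \emph{every} pair $a,b\in Q(n_\ell)$, hence periodic with a proper period $e_\ell:=\gcd\bigl(\{a-b:a,b\in Q(n_\ell)\}\cup\{p_{n_\ell}\}\bigr)$ satisfying $e_\ell\mid p_{n_\ell}$ and $1\le e_\ell<p_{n_\ell}$ (so $p_{n_\ell}/e_\ell\ge2$); in particular $Q(n_\ell),H_0(n_\ell),H_2(n_\ell)$ are unions of cosets of $e_\ell\Z/p_{n_\ell}\Z$.

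Exploiting this periodicity, I would pick for each $\ell$ some $q^*_\ell\in\{0,\dots,e_\ell-1\}\cap Q(n_\ell)$ and set $a_0(n_\ell)=q^*_\ell$, $a_1(n_\ell)=q^*_\ell+e_\ell$ (distinct, both in $Q(n_\ell)$ by periodicity) and $\Delta(n_\ell)=p_{n_\ell}-e_\ell$. A direct check gives the \emph{differential carry}: adding $\Delta(n_\ell)$ at coordinate $n_\ell$ sends $a_0(n_\ell)$ to $q^*_\ell+p_{n_\ell}-e_\ell\in Q(n_\ell)$ \emph{without} a carry, but sends $a_1(n_\ell)$ to $q^*_\ell\pmod{p_{n_\ell}}\in Q(n_\ell)$ \emph{with} a carry into coordinate $n_\ell+1$. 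Passing to a subsequence of $\{n_\ell\}$ and using the standing assumption $\emptyset\neq Q(n_\ell+1)\neq[p_{n_\ell+1}]$, I would additionally fix $a'(n_\ell+1)\in Q(n_\ell+1)$ with $(a'(n_\ell+1)+1)\bmod p_{n_\ell+1}\in H_0(n_\ell+1)$ for all $\ell$ (if instead only the $H_2$-variant is available cofinally often, pass to that subsequence and prove item~(2) of $(\ast)$ by the symmetric argument). Finally, by minimality I would interlace strictly increasing $\{q_\ell\}$ with $|Q(q_\ell)|\ge2$, $\{k_\ell\}$ with $H_2(k_\ell)\neq\emptyset$, $\{k_\ell'\}$ with $H_0(k_\ell')\neq\emptyset$, arranged say as $n_\ell<n_\ell+1<k_\ell<k_\ell'<q_\ell<n_{\ell+1}$; these only serve to make $\mathcal A$ good. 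Throughout I would take $a=1/2$.

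I would then set $\a^x_{n_\ell}=a_{x_\ell}(n_\ell)$, $\a^x_{n_\ell+1}=a'(n_\ell+1)$, $\a^x_{q_\ell}=\xi^1_{q_\ell}$ (with $\xi^1_{q_\ell}<\xi^2_{q_\ell}$ in $Q(q_\ell)$), $\a^x_{k_\ell}\in Q(k_\ell)$ and $\a^x_{k_\ell'}\in Q(k_\ell')$ fixed, and $\a^x_i$ a fixed element of $Q(i)$ at every remaining $i$; then $\mathcal A=\{\a^x:x\in\mathcal T\}$ is uncountable (the map $x\mapsto\a^x$ is injective since $a_0(n_\ell)\neq a_1(n_\ell)$), consists of maximal fibres, and is good. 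For $B\subset\mathcal A$, I would build the net over $(0,1)\times[\mathcal T]^{<\omega}$: for $\nu=(\eps;\a^{x^1},\dots,\a^{x^n})$ with $\a^{x^1},\dots,\a^{x^k}\in\mathcal A\setminus B$ and $\a^{x^{k+1}},\dots,\a^{x^n}\in B$, pick $m$ with $2^{-m}<\eps$, set $\phi^j=(1,\dots,1)\in\{0,1\}^m$ for $j\le k$ and $\phi^j=(0,\dots,0)$ for $j>k$, obtain $m$-realising times $m<\tau_1<\dots<\tau_m$, and let $t_\nu$ carry the digit $\Delta(n_{\tau_s})$ at coordinate $n_{\tau_s}$ ($s=1,\dots,m$) and $0$ elsewhere. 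One then checks: $t_\nu\to0$ in $\odo$; for $j>k$ all coordinates of $\a^{x^j}+t_\nu$ again lie in the corresponding $Q(i)$ and no carry is ever generated, so $T^{t_\nu}(\a^{x^j},y)=(\a^{x^j}+t_\nu,y)$ and hence $T^{t_\nu}\to\Id$ on $L_{\a^{x^j}}$; and for $j\le k$ the \emph{first} non-$Q$ coordinate of $\a^{x^j}+t_\nu$ is at $n_{\tau_1}+1$ and lies in $H_0(n_{\tau_1}+1)$ — because at $n_{\tau_1}$ the $a_1$-digit wraps (carry) while staying in $Q(n_{\tau_1})$ — so the vertical component of $T^{t_\nu}$ on $L_{\a^{x^j}}$ maps $[0,1]$ into $\l_0([0,1])=[0,1/2]$ and has slope at most $2^{-m}$. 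Any accumulation point $f_B$ is therefore the identity on $L_\a$ for $\a\in B$ and collapses $L_\a$ into $\{\a\}\times[0,1/2]$ for $\a\in\mathcal A\setminus B$, with $\pi_1\circ f_B=\pi_1$; replacing $f_B$ by an idempotent in $\langle f_B\rangle$ (which preserves both properties, as in Lemma~\ref{thm:case1}) yields item~(1) of $(\ast)$ with $a=1/2$, $B_1=\emptyset$, $B_2=B$.

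The main obstacle will be the carry bookkeeping, i.e.\ checking the above assertions simultaneously: that the $a_1$-digit at $n_{\tau_s}$ really produces a carry while remaining in $Q(n_{\tau_s})$ — this is exactly where the periodicity extracted from \eqref{item: B} and the explicit choices of $a_0,a_1,\Delta$ are used — that this carry consistently lands in $H_0$ at $n_{\tau_s}+1$ (whence the subsequence passage and the $H_0$/$H_2$ dichotomy), that the $a_0$-digit at $n_{\tau_s}$ produces \emph{no} carry so the $B$-fibres never leave the $Q$-cylinders, and that one single time $t_\nu$ accomplishes all of this for the finitely many fibres attached to $\nu$ at once — the latter being precisely the defining property of the choice domain $\mathcal T$. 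In contrast to case~\eqref{item: A}, no auxiliary sequence $\{m_\ell\}$ for containing carries is required here: the $B$-fibres carry nowhere, and for the $\mathcal A\setminus B$-fibres any carry propagating beyond $n_{\tau_s}+1$ is harmless, as it only composes further $\l$-factors \emph{inside} $[0,1/2]$ (already secured by the first $\l_0$-hit) and only adds contracting coordinates; hence one $\l_0$-hit suffices and $a=1/2$ works, the separation in the ensuing application of Proposition~\ref{prop: star implies main} being against the fibre-images of the minimal idempotent from Lemma~\ref{lem:existence of minimal}, which by construction avoid $\{\a\}\times[0,1/2]$.
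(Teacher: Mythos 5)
Your proposal is correct, and it reaches $(\ast)$ by a genuinely different mechanism than the paper. The paper's proof of this lemma uses \eqref{item: B} only to place \emph{both} digits $a_0(n_\ell)+\Delta(n_\ell)$ and $a_1(n_\ell)+\Delta(n_\ell)$ in the \emph{same} class $H_2(n_\ell)$ (or $H_0(n_\ell)$), one with and one without a wrap; it must then split into two sub-cases according to whether $H_2(n_\ell)$ is empty, and in the second sub-case it has to interleave auxiliary correction terms $\gamma_\ell$ (forcing extra $H_2$-hits at the positions $k_\ell$) so that the $B$-fibres converge to $1/3$ rather than collapsing to $0$ together with the $\mathcal A\setminus B$-fibres; the resulting partition is $B_1=B$, $B_2=\emptyset$. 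You instead extract from \eqref{item: B} the algebraic fact that the labelling $k\mapsto\l^{n_\ell}_k$ is periodic with a proper period $e_\ell\mid p_{n_\ell}$, $e_\ell<p_{n_\ell}$, so that $Q(n_\ell)$ contains a full coset of $e_\ell\Z/p_{n_\ell}\Z$; choosing $a_0=q^*_\ell$, $a_1=q^*_\ell+e_\ell$ and $\Delta=p_{n_\ell}-e_\ell$ then keeps \emph{both} digits inside $Q(n_\ell)$ while exactly one of them wraps, and the entire separation is carried by the single carry into $H_0(n_{\tau_s}+1)$ (or $H_2$, yielding item~(2) of $(\ast)$ by symmetry). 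This reduces case \eqref{item: B} to the same $B_1=\emptyset$, $B_2=B$ mechanism the paper uses for case \eqref{item: C}, eliminates the Case~1/Case~2 dichotomy and the $\gamma_\ell$ bookkeeping, and — as you correctly observe — makes the carry-containment sequence $\{m_\ell\}$ unnecessary, since the $B$-fibres generate no carries at all and any runaway carry on an $\mathcal A\setminus B$-fibre only composes further non-expanding maps inside the outermost $\l_0$. The arithmetic checks out ($q^*_\ell+p_{n_\ell}-e_\ell<p_{n_\ell}$ and lies in the same coset, hence in $Q(n_\ell)$; $q^*_\ell+e_\ell+\Delta=q^*_\ell+p_{n_\ell}$ wraps with residue $q^*_\ell\in Q(n_\ell)$; and the hit at $n_{\tau_s}+1$ is robust to an incoming carry since the outgoing carry from $n_{\tau_s}$ is always exactly one), and your interlacing $n_\ell<n_\ell+1<k_\ell<k_\ell'<q_\ell<n_{\ell+1}$ avoids the only potential clash between the carry-target position and the next choice position. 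The argument is, if anything, more streamlined and unified than the paper's.
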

\begin{proof}
Fix an infinite sequence $\{n_\ell\}_{\ell\in\N}$ as in \eqref{item: B}, and let
$\{m_\ell\}_{\ell\in\N}$, $\{k_\ell\}_{\ell\in\N}$, $\{k_\ell'\}_{\ell\in\N}$, and $\{q_\ell\}_{\ell\in\N}$ be strictly increasing sequences, together with elements $\xi_{q_\ell}^1,\xi_{q_\ell}^2\in Q(q_\ell)$ satisfying $0\le \xi_{q_\ell}^1<\xi_{q_\ell}^2<p_{q_\ell}$, defined similarly as in the proof of Lemma~\ref{thm:case1}.
In particular,
\[
n_\ell< m_\ell<k_\ell<k_\ell'<q_\ell<n_{\ell+1}.
\]
Recall our standing assumption
\begin{equation}\label{eq:20255251429}
H_0(n)\cup H_2(n)\neq\emptyset \qquad \text{for all } n\in\N .
\end{equation}

For each $i\in \N\setminus\big(\{n_\ell\}_{\ell\in\N}\cup \{q_\ell\}_{\ell\in\N}\big)$, fix some $a(i)\in Q(i)$.
In view of \eqref{eq:20255251429}, we may assume without loss of generality that
$a(n_\ell+1)+1\pmod{p_{n_\ell+1}}\in H_0(n_\ell+1)\cup H_2(n_\ell+1)$ for all $\ell\in\N$.
For $x\in\mc T$ (with $\mc T$ as above), define $\alpha^x\in\Sigma_{(p_n)}$ by
\[
\alpha^x_i=a(i)\quad\text{for all }i\in \N\setminus\big(\{n_\ell\}_{\ell\in\N}\cup \{q_\ell\}_{\ell\in\N}\big),\qquad
\alpha^x_{q_\ell}=\xi_{q_\ell}^1,\qquad
\alpha^x_{n_\ell}=a_{x_\ell}(n_\ell)\quad(\ell\in\N).
\]
Let $\mathcal A:=\{\alpha^x:\,x\in\mc T\}$ denote the uncountable family of all such sequences.
By construction, $\mathcal A$ is \good.
Moreover, by an argument similar to that in the proof of Lemma~\ref{lem:existence of minimal}, for each $\ell\in \N,$ we may choose $u^\ell_0\in [p_{k_\ell}],\ldots,u^\ell_{q_\ell-k_\ell}\in [p_{q_\ell}]$ such that \begin{equation}\label{eq:2025991926}
    \gamma_\ell=\overbrace{00\ldots00}^{k_\ell-1}\overbrace{u^\ell_0u^\ell_1\ldots u^\ell_{q_\ell-k_\ell}}^{q_\ell-k_\ell+1}00\ldots,
\end{equation}
satisfies,  for each $\a\in\mathcal{A}$,
\begin{align}\label{eq: 47}(\a+ \gamma_\ell)_n\in\begin{cases}
H_2(n) & \text{ for } n=k_\ell,\\
Q(n) & \text{ otherwise;}
\end{cases}
 \end{align} and
\begin{align}\label{eq: almost all coordinates unchanged}
    \a_n=(\a+\gamma_\ell)_n \text{ for all }n\in \N\setminus \{k_\ell,k_{\ell+1},\ldots,q_\ell\}.
\end{align}

To prove that $(X,T)$ satisfies property~$(\ast)$, we consider the following two cases, which---after going over to subsequences if needed---cover all possibilities.

\begin{itemize}
    \item[\textbf{Case 1.}] There exists $i\in\{0,2\}$ such that
\[a(n_\ell+1)+1\bmod p_{n_\ell+1}\in H_i(n_\ell+1)\text{ and }H_{2-i}(n_\ell)\neq\emptyset\text{ for all }\ell\in\N.\]
    \item[\textbf{Case 2.}] There exists $i\in\{0,2\}$ such that
\[a(n_\ell+1)+1\bmod p_{n_\ell+1}\in H_i(n_\ell+1)\text{ and }H_{2-i}(n_\ell)=\emptyset\text{ for all }\ell\in\N.\]
\end{itemize}
We now treat each case separately.

\medskip
\noindent\textbf{Case 1.} Similar as in the proof of Lemma~\ref{thm:case1} (see also footnote~\ref{footnote 1}), we may assume without loss of generality that
$i=0$, that is,
\begin{equation*}
     a(n_\ell+1)+1\bmod p_{n_\ell+1}\in H_0(n_\ell+1)\text{ and }H_{2}(n_\ell)\neq\emptyset \text{ for all } \ell\in\N.
\end{equation*}
Observe that due to assumption \eqref{item: B}, for each $\ell\in\N$, we may (and do) choose $a_0(n_\ell)\neq a_1(n_\ell)\in Q(n_\ell)$ and $\D(n_\ell)$ such that $H_2(n_\ell)\ni a_0(n_\ell)+\D(n_\ell)\ge p_{n_\ell}$ and $H_2(n_\ell)\ni a_1(n_\ell)+\D(n_\ell)< p_{n_\ell}$.

Pick some $B\subset \mathcal{A}$.
 Given $\eps>0$, and distinct $\alpha^{x^1},\ldots,\alpha^{x^k}\in \mathcal A\setminus B$ and $\alpha^{x^{k+1}},\ldots,\alpha^{x^n}\in B$,
 choose $m\in \N$ such that $2^{-m}<\eps$;
 pick $\phi^j=(0,1,\ldots,1)\in\{0,1\}^m$ for $j=1,2,\ldots,k$ and $\phi^j=(1,1,\ldots,1)\in\{0,1\}^m$ for $j=k+1,k+2,\ldots,n$.
Let $m<\tau_1<\tau_2<\ldots<\tau_m$ be $m$-realising for $x^1,\ldots, x^n$ and $\phi^1,\ldots,\phi^n$ and
 set
 \[     t_{\eps;x^1,\ldots,x^n}=\underbrace{\overbrace{0000\Delta(n_{\tau_{1}})}^{n_{\tau_{1}}}\overbrace{00\ldots00\Delta(n_{\tau_2})}^{n_{\tau_2}-n_{\tau_{1}}}000\ldots00\Delta(n_{\tau_m})}_{n_{\tau_m}}.
 \]
Note that for $j=1,2\ldots,k$
\begin{equation}\label{eq:20255211345new}
\begin{split}
    &(\a^{x^j}+t_{\eps;x^1,\ldots,x^n})_i\in Q(i), \text{ for } i\in\N\setminus(\{n_{\tau_s}\}_{s=1}^m\cup\{n_{\tau_1}+1,\ldots,m_{\tau_1}\}),\\
    & (\a^{x^j}+t_{\eps;x^1,\ldots,x^n})_i\in H_{2}(i),\text{ for }i\in\{n_{\tau_s}\}_{s=1}^m\\
    &(\a^{x^j}+t_{\eps;x^1,\ldots,x^n})_i\in H_{0}(i),\text{ for $i=n_{\tau_1}+1$},\\
    &(\a^{x^j}+t_{\eps;x^1,\ldots,x^n})_i\in H_{0}(i)\cup H_2(i)\cup Q(i),\text{ for $i\in\{n_{\tau_1}+2,\ldots,m_{\tau_1}\}$},
\end{split}
\end{equation}
while for $j=k+1,k+2,\ldots,n$,
\begin{equation}\label{eq:20255211345new2}
\begin{split}
    &(\a^{x^j}+t_{\eps;x^1,\ldots,x^n})_i\in Q(i), \text{ for } i\in\N\setminus\{n_{\tau_s}\}_{s=1}^m,\\
    & (\a^{x^j}+t_{\eps;x^1,\ldots,x^n})_i\in H_{2}(i),\text{ for }i\in\{n_{\tau_s}\}_{s=1}^m.
\end{split}
\end{equation}

As in the proof of Lemma~\ref{thm:case1}, we let \( f \) be an idempotent in the semigroup generated by an accumulation point of the net
$
\{T^{t_{\varepsilon; x^1,\ldots,x^n}}\}_{(\varepsilon; x^1,\ldots,x^n) \in (0,1) \times [\mathcal{T}]^{<\omega}}
\ssq  E(X)$.
Using \eqref{eq:20255211345new} and \eqref{eq:20255211345new2}, we can---following a similar reasoning as in Lemma~\ref{thm:case1}---show that for each \( z \in [0,1] \)
\begin{align*}
\pi_2 \circ f(\alpha^x, z) \in [0, 3/4] \ \ (\alpha^x\in \mc A\setminus B) \qquad \text{and} \qquad
\pi_2 \circ f(\alpha^x, z) = 1 \ \ ( \alpha^x \in B).
\end{align*}
Hence, with \( a = 3/4 \) and \( B_2 = \emptyset \) for each $B\ssq \mc A$, the system \( (X,T) \) satisfies (1) in property \((*)\).

\medskip

\noindent\textbf{Case 2.}
Similar as in the previous case, we may assume without loss of generality that
$i=0$, that is,
\begin{equation*}
     a(n_\ell+1)+1\bmod p_{n_\ell+1}\in H_0(n_\ell+1)\text{ and }H_{2}(n_\ell)=\emptyset \text{ for all } \ell\in\N.
\end{equation*}
By \eqref{eq:20255251429}, this implies
\begin{equation}\label{eq:20255251514}
    H_{0}(n_\ell)\neq\emptyset \text{ for all } \ell\in\N.
\end{equation}
Due to assumption \eqref{item: B}, for each $\ell \in \N$, we can choose $a_0(n_\ell)\neq a_1(n_\ell)\in Q(n_\ell)$ and $\D(n_\ell)$ such that $H_0(n_\ell)\ni a_0(n_\ell)+\D(n_\ell)\ge p_{n_\ell}$ and $H_0(n_\ell)\ni a_1(n_\ell)+\D(n_\ell)< p_{n_\ell}$.

Pick some $B\subset \mathcal{A}$.
 Given $\eps>0$, and distinct $\alpha^{x^1},\ldots,\alpha^{x^k}\in \mathcal A\setminus B$ and $\alpha^{x^{k+1}},\ldots,\alpha^{x^n}\in B$,
 choose $m\in \N$ such that $2^{-m}<\eps$;
 pick $\phi^j=(0,1,\ldots,1)\in\{0,1\}^m$ for $j=1,2,\ldots,k$ and $\phi^j=(1,1,\ldots,1)\in\{0,1\}^m$ for $j=k+1,k+2,\ldots,n$.
Let  $m<\tau_1<\tau_2<\ldots<\tau_m$ be $m$-realising for $x^1,\ldots, x^n$ and $\phi^1,\ldots,\phi^n$ and
 set
 \[     t_{\eps;x^1,\ldots,x^n}=\underbrace{\overbrace{0000\Delta(n_{\tau_{1}})}^{n_{\tau_{1}}}\overbrace{00\ldots00\Delta(n_{\tau_2})}^{n_{\tau_2}-n_{\tau_{1}}}000\ldots00\ldots00\Delta(n_{\tau_m})}_{n_{\tau_m}}+\sum_{i=1}^m\gamma_{\tau_i},
 \]
with $\gamma_i$ as in \eqref{eq:2025991926}.
Using \eqref{eq: 47} and \eqref{eq: almost all coordinates unchanged}, we have that for $j=1,2,\ldots,k$,
\begin{equation}\label{eq:20255211345neww}
\begin{split}
    &(\a^{x^j}+t_{\eps;x^1,\ldots,x^n})_i\in Q(i), \text{ for } i\in\N\setminus(\{n_{\tau_s}\}_{s=1}^m\cup\{n_{\tau_1}+1,\ldots,m_{\tau_1}\}\cup\{k_{\tau_s}\}_{s=1}^m),\\
    & (\a^{x^j}+t_{\eps;x^1,\ldots,x^n})_i\in H_{0}(i),\text{ for }i\in\{n_{\tau_s}\}_{s=1}^m\cup\{n_{\tau_1}+1\},\\
    &(\a^{x^j}+t_{\eps;x^1,\ldots,x^n})_i\in H_{2}(i),\text{ for $i\in\{k_{\tau_s}\}_{s=1}^m$},\\
    &(\a^{x^j}+t_{\eps;x^1,\ldots,x^n})_i\in H_0(i)\cup H_{2}(i)\cup Q(i),\text{ for $i\in\{n_{\tau_1}+2,\ldots,m_{\tau_1}\}$},
\end{split}
\end{equation}
and for $j=k+1,k+2,\ldots,n$,
\begin{equation}\label{eq:20255211345neww2}
\begin{split}
    &(\a^{x^j}+t_{\eps;x^1,\ldots,x^n})_i\in Q(i), \text{ for } i\in\N\setminus(\{n_{\tau_s}\}_{s=1}^m\cup\{k_{\tau_s}\}_{s=1}^m),\\
     & (\a^{x^j}+t_{\eps;x^1,\ldots,x^n})_i\in H_{0}(i),\text{ for }i\in\{n_{\tau_s}\}_{s=1}^m\\
    & (\a^{x^j}+t_{\eps;x^1,\ldots,x^n})_i\in H_{2}(i),\text{ for }i\in\{k_{\tau_s}\}_{s=1}^m.
\end{split}
\end{equation}
Using \eqref{eq:20255211345neww} and \eqref{eq:20255211345neww2}, we can---arguing as before---show that there is an idempotent $f$ such that for every $z\in [0,1]$,
\begin{align*}
\pi_2 \circ f(\alpha^x, z) \in [0, 1/4] \ \  (\alpha^x \in \mathcal{A} \setminus B) \qquad \text{and} \qquad
\pi_2 \circ f(\alpha^x, z) = 1/3 \ \ (\alpha^x \in B).
\end{align*}

This shows that \( (X,T) \) satisfies (1) in property \((*)\)
with \( a = 1/4 \) and \( B_2 = \emptyset \) for each $B\ssq \mc A$ and thus,
finishes the proof.
\end{proof}

\begin{lem}\label{thm:case3}
 If $(X,T)$ is a minimal Floyd-Auslander system satisfying \eqref{item: C}, then $(X,T)$ satisfies $(\ast)$.
\end{lem}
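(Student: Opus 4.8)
The plan is to follow the proofs of Lemmas~\ref{thm:case1} and~\ref{thm:case2} almost verbatim, exploiting the feature that distinguishes case~\eqref{item: C}: at the ``digit‑selecting'' levels $n_\ell$, shifting by $\Delta(n_\ell)$ sends $a_0(n_\ell)$ back into $Q(n_\ell)$ (so the corresponding fibre is left \emph{untouched}) whereas it sends $a_1(n_\ell)$ into $H_0(n_\ell)\cup H_2(n_\ell)$ (so the corresponding fibre is \emph{collapsed}, towards $0$ or towards $1$). I would therefore aim to verify property~$(\ast)$ with $B_1=\emptyset$ and $B_2=B$ for every $B\ssq\mc A$: the idempotent will be the identity on the fibres above $B$ and will collapse every fibre above $\mc A\setminus B$ into a fixed half of the unit interval.

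First I would normalise. By the pigeonhole principle, after passing to a subsequence of $\{n_\ell\}$ one may assume that $a_1(n_\ell)+\Delta(n_\ell)\pmod{p_{n_\ell}}$ lies in $H_0(n_\ell)$ for \emph{all} $\ell$, or in $H_2(n_\ell)$ for all $\ell$; since $(X,T)$ is conjugate via $(\a,y)\mapsto(\a,1-y)$ to the Floyd--Auslander system obtained by interchanging $\l_0$ and $\l_2$ in the defining data (a swap that exchanges $H_0\leftrightarrow H_2$ and, accordingly, parts (1) and (2) of property~$(\ast)$), it is enough to treat the $H_0$‑alternative. As in Lemmas~\ref{thm:case1}--\ref{thm:case2} I would pick strictly increasing auxiliary sequences $\{m_\ell\},\{k_\ell\},\{k_\ell'\},\{q_\ell\}$ with $p_{m_\ell}-1\in H_0(m_\ell)\cup H_2(m_\ell)$, $H_2(k_\ell)\neq\emptyset$, $H_0(k_\ell')\neq\emptyset$, $|Q(q_\ell)|\ge 2$, interleaved as $n_\ell<n_\ell+1<m_\ell<k_\ell<k_\ell'<q_\ell<n_{\ell+1}$ (these exist by Definition~\ref{defn: Floyd-Auslander system}, by minimality via Theorem~\ref{thm:FA minimal}, and because $\L$ is infinite), fix $\xi^1_{q_\ell}<\xi^2_{q_\ell}$ in $Q(q_\ell)$ and $a(i)\in Q(i)$ for every $i\notin\{n_\ell\}\cup\{q_\ell\}$. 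With $\mc T$ an uncountable choice domain of the binary full shift (Lemma~\ref{lem: uncountable choice domains}), I would set $\mc A=\{\a^x:x\in\mc T\}$ with $\a^x_{n_\ell}=a_{x_\ell}(n_\ell)$, $\a^x_{q_\ell}=\xi^1_{q_\ell}$ and $\a^x_i=a(i)$ otherwise; since $a_0(n_\ell)\neq a_1(n_\ell)$ the map $x\mapsto\a^x$ is injective, so $\mc A$ is uncountable, it is \good\ by construction ($\{q_\ell\}$ realising (P1), $\{k_\ell\},\{k_\ell'\}$ realising (P2)), and every coordinate of every $\a^x$ is a $Q$‑coordinate, so each $L_{\a^x}=\{\a^x\}\times[0,1]$.

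The core construction is the net. Fixing $B\ssq\mc A$, for $\eps>0$ and pairwise distinct $\a^{x^1},\dots,\a^{x^k}\in\mc A\setminus B$, $\a^{x^{k+1}},\dots,\a^{x^n}\in B$ I would take $m$ with $2^{-m}<\eps$, set $\phi^j=(1,\dots,1)\in\{0,1\}^m$ for $j\le k$ and $\phi^j=(0,\dots,0)$ for $j>k$, choose $m$‑realising times $m<\tau_1<\dots<\tau_m$ for $x^1,\dots,x^n$ and $\phi^1,\dots,\phi^n$, and let $t_{\eps;x^1,\dots,x^n}$ carry $\Delta(n_{\tau_s})$ in position $n_{\tau_s}$ ($s=1,\dots,m$) and $0$ elsewhere. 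Then for $j\le k$ the coordinate of $\a^{x^j}+t_{\eps;x^1,\dots,x^n}$ at each level $n_{\tau_s}$ equals $a_1(n_{\tau_s})+\Delta(n_{\tau_s})\in H_0(n_{\tau_s})$; for $j>k$ it equals $a_0(n_{\tau_s})+\Delta(n_{\tau_s})\in Q(n_{\tau_s})$, producing no carry, and all other coordinates stay in $Q(\cdot)$, so $T^{t_{\eps;x^1,\dots,x^n}}$ acts on $L_{\a^{x^j}}$ by $(\a^{x^j},y)\mapsto(\a^{x^j}+t_{\eps;x^1,\dots,x^n},y)$. Since $t_{\eps;x^1,\dots,x^n}\to 0$ in $\odo$ along the direction on $(0,1)\times[\mc T]^{<\omega}$ used in Lemmas~\ref{thm:case1}--\ref{thm:case2} (its first nonzero digit sits at position $n_{\tau_1}$, and $n_{\tau_1}\to\infty$), an idempotent $f$ in the compact right topological subsemigroup of $E(X)$ generated by an accumulation point of $\{T^{t_{\eps;x^1,\dots,x^n}}\}$ (Lemma~\ref{lem:existence of idempotent}) restricts to the identity on every $L_{\a^x}$ with $\a^x\in B$, while for $\a^x\in\mc A\setminus B$ the images $T^{t_{\eps;x^1,\dots,x^n}}(L_{\a^x})$ are sub‑intervals of length $\le 2^{-m}\to 0$, so $f$ collapses $L_{\a^x}$ to a single point, which one localises into $\{\a^x\}\times[0,1/2]$ exactly as in Lemma~\ref{thm:case1}. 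This gives part~(1) of $(\ast)$ with $a=1/2$, $B_1=\emptyset$, $B_2=B$; the $H_2$‑alternative of the normalisation step symmetrically gives part~(2) via $(\a,y)\mapsto(\a,1-y)$, and property~$(\ast)$ — hence the statement — follows.

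The step I expect to be the main obstacle is precisely this last localisation, i.e.\ showing that the collapsed point lands in a fixed half of $[0,1]$. A carry leaving level $n_{\tau_s}$ for a point of $\mc A\setminus B$ may propagate through the block $(n_{\tau_s},m_{\tau_s}]$ (it cannot leave it, since $p_{m_{\tau_s}}-1\in H_0\cup H_2$ forces $a(m_{\tau_s})\le p_{m_{\tau_s}}-2$) and thereby introduce uncontrolled $\l_0/\l_1/\l_2$‑factors \emph{below} the outermost one; one has to argue — after possibly a further subsequence of $\{n_\ell\}$, a suitable choice of the $a(n_\ell+1)$, and, if necessary, interchanging parts (1) and (2) of $(\ast)$ as in footnote~\ref{footnote 1} and the analogue of~\eqref{eq:small assumption} — that the second non‑identity factor from the top is again $\l_0$, giving a collapsed point in $[0,1/4]$, with a single residual carry sub‑case in which this factor is $\l_2$ and one only gets $[1/4,1/2]$; either way $a=1/2$ suffices for property~$(\ast)$. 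This bookkeeping is routine and already present in the proof of Lemma~\ref{thm:case1}, but it is the one genuinely technical ingredient not immediate from the earlier cases.
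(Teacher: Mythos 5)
Your proposal is correct and follows essentially the same route as the paper: the same convenient family $\mc A=\{\a^x\}$, the same choice functions (all ones on $\mc A\setminus B$, all zeros on $B$), the same net carrying $\Delta(n_{\tau_s})$ at the positions $n_{\tau_s}$, and the same conclusion with $a=1/2$, $B_1=\emptyset$, $B_2=B$. The localisation you single out as the main obstacle is in fact immediate: since the outermost non-identity factor (at level $n_{\tau_1}$) is $\l_0$ and all shallower coordinates stay in $Q$, the image lies in $\l_0([0,1])=[0,1/2]$ no matter how the carry propagates through $(n_{\tau_s},m_{\tau_s}]$, so no control of the deeper factors is needed.
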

\begin{proof}
Fix an infinite sequence \( \{n_\ell\}_{\ell \in \mathbb{N}} \) as in \eqref{item: C}, and let
$\{m_\ell\}_{\ell\in\N}$, $\{k_\ell\}_{\ell\in\N}$, $\{k_\ell'\}_{\ell\in\N}$, and $\{q_\ell\}_{\ell\in\N}$ be strictly increasing sequences together with elements $\xi_{q_\ell}^1,\xi_{q_\ell}^2\in Q(q_\ell)$ satisfying $0\le \xi_{q_\ell}^1<\xi_{q_\ell}^2<p_{q_\ell}$,  defined similarly as in the proof of Lemma~\ref{thm:case1}.
In particular,
\[
n_\ell< m_\ell<k_\ell<k'_\ell<q_\ell<n_{\ell+1}.
\]
Passing to a subsequence, we may assume without loss of generality that\footnote{The case where
$a_1(n_\ell) + \Delta(n_\ell) \pmod {p_{n_\ell}} \in H_2(n_\ell)$ for all $\ell \in \mathbb{N}$ is similar.}
\[
a_1(n_\ell) + \Delta(n_\ell) \pmod {p_{n_\ell}} \in H_0(n_\ell) \quad \text{for all } \ell \in \mathbb{N}.
\]

For each $i\in \N\setminus\big(\{n_\ell\}_{\ell\in\N}\cup \{q_\ell\}_{\ell\in\N}\big)$, we now fix some $a(i)\in Q(i)$.
For $x\in\mc T$ (with $\mc T$ as above), define $\alpha^x\in\Sigma_{(p_n)}$ by
\[
\alpha^x_i=a(i)\quad\text{for all }i\in \N\setminus\big(\{n_\ell\}_{\ell\in\N}\cup \{q_\ell\}_{\ell\in\N}\big),\qquad
\alpha^x_{q_\ell}=\xi_{q_\ell}^1,\qquad
\alpha^x_{n_\ell}=a_{x_\ell}(n_\ell)\quad(\ell\in\N).
\]
Let $\mathcal A:=\{\alpha^x:\,x\in\mc T\}$ denote the family of all such sequences.
By construction, $\mathcal A$ is \good.

Consider $B\subset\mathcal{A}$.
Given $\eps>0$, and distinct $\alpha^{x^1},\ldots,\alpha^{x^k}\in \mathcal A\setminus B$ and $\alpha^{x^{k+1}},\ldots,\alpha^{x^n}\in B$,
 choose $m\in \N$ such that $2^{-m}<\eps$; for $j=1,2,\ldots,k$,
 pick $\phi^j=(1,1,\ldots,1)\in\{0,1\}^m$ and for $j=k+1,k+2,\ldots,n$, set $\phi^j=(0,0,\ldots,0)\in\{0,1\}^m$.

 Let $m<\tau_1<\tau_2<\ldots<\tau_m$ be $m$-realising for $x^1,\ldots, x^n$ and $\phi^1,\ldots,\phi^n$ and
 set
 \[     t_{\eps;x^1,\ldots,x^n}=\underbrace{\overbrace{0000\Delta(n_{\tau_{1}})}^{n_{\tau_{1}}}\overbrace{00\ldots00\Delta(n_{\tau_2})}^{n_{\tau_2}-n_{\tau_{1}}}000\ldots00\Delta(n_{\tau_m})}_{n_{\tau_m}}00\ldots
 \]



Note that for $j=1,2,\ldots,k$
\begin{equation}\label{eq:20255211345new4}
\begin{split}
    &(\a^{x^j}+t_{\eps;x^1,\ldots,x^n})_i\in Q(i), \text{ for } i\in\N\setminus\left(\{n_{\tau_s},n_{\tau_s}+1,\ldots,m_{\tau_s}\}_{s=1}^m\right),\\
    & (\a^{x^j}+t_{\eps;x^1,\ldots,x^n})_i\in H_{0}(i),\text{ for }i\in\{n_{\tau_s}\}_{s=1}^m,\\
    & (\a^{x^j}+t_{\eps;x^1,\ldots,x^n})_i\in H_{0}(i)\cup H_2(i)\cup Q(i),\text{ for }i\in\{n_{\tau_s}+1,n_{\tau_s}+2,\ldots,m_{\tau_s}\}_{s=1}^m,
\end{split}
\end{equation}
while for $j=k+1,k+2,\ldots,n$,
\begin{equation}\label{eq:20255211345neww4}
(\a^{x^j}+t_{\eps;x^1,\ldots,x^n})_i\in Q(i), \text{ for } i\in\N.
\end{equation}

Using \eqref{eq:20255211345new4} and \eqref{eq:20255211345neww4}, we can---similarly as before---show that there is an idempotent $f$ such that for each $z\in [0,1]$
\begin{align*}
\pi_2 \circ f(\alpha^x, z) \in [0,1/2] \quad (\alpha^x \in \mathcal{A} \setminus B )
\qquad \text{and} \qquad \pi_2 \circ f(\alpha^x, z) = z \quad (\alpha^x \in B).
\end{align*}
That is, \( (X,T) \) satisfies (1) of property \((*)\) with
\( a = 1/2 \) and \( B_1 = \emptyset \) for each $B\ssq \mc A$.
%
%
\end{proof}

\bibliographystyle{acm}
\bibliography{ref}
\end{document}